\theoremstyle{plain}
\newtheorem*{thm*}{Theorem}
\newtheorem{thm}{Theorem}[section]
\newtheorem{cor}[thm]{Corollary}
\newtheorem{lem}[thm]{Lemma}
\newtheorem*{lem*}{Lemma}
\newtheorem{prop}[thm]{Proposition}
\theoremstyle{definition}
\newtheorem{dfn}[thm]{Definition}
\newcommand\Defn[1]{\textbf{\color{black}#1}}
\newcommand{\R}{\mathbb{R}}
\newcommand{\Z}{\mathbb{Z}}
\newcommand{\Zn}{\mathbb{Z}^{n}}
\newcommand{\Pord}{\mathcal{O}}
\newcommand{\Pdir}{\mathcal{D}}
\newcommand{\Pbid}{\mathcal{B}}
\newcommand\ds{\mathbf{d}}
\newcommand\ext[1]{\widehat{#1}}
\DeclareMathOperator{\conv}{conv}
\DeclareMathOperator{\relin}{relint}
\newcommand{\relint}[1]{\relin \left( #1 \right) }
\title{On degree sequences of undirected, directed, and bidirected graphs}
\author{Laura Gellert}
\address{Institut f\"ur Optimierung und Operations Research, Universit\"at Ulm, Ulm, Germany}
\email{laura.gellert@uni-ulm.de}
\author{Raman Sanyal}
\address{Institut f\"ur Mathematik, Goethe-Universit\"at Frankfurt, Germany}
\email{sanyal@math.uni-frankfurt.de}
\keywords{bidirected graphs, net-degree sequences, Erdos--Gallai-type results,
Havel--Hakimi type results, uniquely realizable, polytopes of degree
sequences }
\subjclass[2010]{
05C22, 
05C07, 
05C20, 
52B12  
}
\date{\today}
\begin{document}

\begin{abstract}
    Bidirected graphs generalize directed and undirected graphs 
    in that edges are oriented locally at every node. 
    The natural notion of the degree of a
    node that takes into account (local) orientations is that of 
    net-degree. In this paper, we extend the following four topics from
    (un)directed graphs to bidirected graphs:
    \begin{compactitem}[\hspace{2em}--\ ]
        \item Erd\H{o}s--Gallai-type results: characterization of net-degree
            sequences,
        \item Havel--Hakimi-type results: complete sets of degree-preserving
            operations,
        \item Extremal degree sequences: characterization of uniquely
            realizable sequences, and 
        \item Enumerative aspects: counting formulas for net-degree sequences.
    \end{compactitem}
    To underline the similarities and differences to their (un)directed
    counterparts, we briefly survey the undirected setting and we give a
    thorough account for digraphs with an emphasis on the discrete geometry of
    degree sequences. In particular, we determine the tight and uniquely
    realizable degree sequences for directed graphs.
\end{abstract}

\maketitle

\section{Introduction}\label{sec:intro}

Let $G = (V,E)$ be an undirected simple graph. The degree $\ds(G)_v$ of a node
$v \in V$ is the number of edges $e \in E$ incident to $v$. The \Defn{degree
sequence} of $G$ is the vector $\ds(G) \in \Z^V$. Degree sequences of
undirected graphs are basic combinatorial statistics that have received
considerable attention (starting as early as 1874 with Cayley~\cite{Cay1874})
that find applications in many areas such as communication networks,
structural reliability, and stereochemistry; c.f.~\cite{application}.  A
groundbreaking result, due to Erd\H{o}s and Gallai~\cite{EG}, is a
characterization of degree sequences among all sequences of nonnegative
integers. In a complementary direction, Havel~\cite{Havel} and
Hakimi~\cite{Hakimi} described a complete set of  degree preserving operations
on graphs.  These two fundamental results have led to a thorough understanding
of degree sequences.  A geometric perspective on degree sequences, pioneered
by Peled and Srinivasan~\cite{PS}, turns out to be particularly fruitful.
Here, degree sequences of graphs on $n$ labelled nodes are identified with
points in $\Z^n$ and the collection of all such degree sequences can be
studied by way of \emph{polytopes of degree sequences}. Among the many results
obtained from this approach, we mention the formulas for counting degree
sequences due to Stanley~\cite{ZGDS} and the classification of uniquely
realizable degree sequences by Koren~\cite{Kor}; see
Section~\ref{sec:undirected}.

In this paper, we consider notions of degree sequences for other classes of
graphs. A \Defn{directed graph} $D = (V,A)$ is a simple undirected graph
together with an orientation of each edge, that is, every edge (or arc) is an
ordered tuple $(u,v)$ and is hence is oriented from $u$ to $v$.  The natural
notion of degree that takes into account orientations is the \Defn{net-degree}
of a node $v$ defined by
\begin{equation}\label{eqn:netdeg}
    \ds(D)_v \ := \ |\{ u \in V : (u,v) \in A \}| - |\{ u \in V : (v,u) \in A
    \}|.
\end{equation}
The net-degree is sometimes called the \emph{imbalance};
see~\cite{Mubayi,Pirzada}.

Of particular interest in this paper will be the class of bidirected
graphs. Bidirected graphs were introduced by Edmonds
and Johnson~\cite{edmonds} in 1970 and studied under the name of oriented
signed graphs by Zaslavsky~\cite{ZSGaG}. 
\begin{dfn} \label{dfn:bigraph}
    A \Defn{bidirected graph} or \Defn{bigraph}, for short, is a pair $B =
    (G,\tau)$ where $G = (V,E)$ is a simple undirected graph and $\tau$
    assigns to every pair $(v,e) \in V \times E$ 
    a local orientation $\tau(v,e) \in \{-1,1\}$ if  $v$ is incident to $e$
    and $\tau(v,e)=0$ otherwise.
\end{dfn}    
We call $|B| := G$ the \Defn{underlying graph} of $B$ and $\tau$ its
\Defn{bidirection function}. The local orientation of an edge $e = uv$ can be
interpreted such that $e$ points locally towards $v$ if $\tau(v,e) = 1$ and
away from $v$ otherwise. Graphically, we draw the edges with two arrows, one
at each endpoint; see Figure~\ref{fig:bigraph}. The notion of net-degree has a
natural extension to bigraphs where the net-degree of a node $v$ is given by
$\ds(B)_v = \sum_{e\in E} \tau(v,e)$.

\begin{figure}[ht]
\centering
\includegraphics[width=0.2\textwidth]{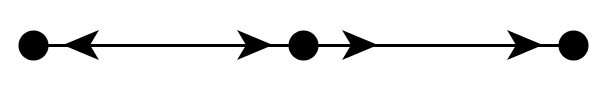} 

\caption{A bigraph $B$ on three nodes.}
\label{fig:bigraph}
\end{figure}

The objective of this paper is to extend the following four topics 
from (un)directed graphs to bigraphs:
\begin{compactitem}[\hspace{.5cm}$\circ$]
\item Erd\H{o}s--Gallai-type results: Characterization of net-degree
    sequences.
\item Havel--Hakimi-type results: Complete sets of degree-preserving
    operations.
\item extremal degree sequences: Characterization of uniquely realizable
    sequences.
\item enumerative aspects: Counting formulas for net-degree sequences.
\end{compactitem}

Bigraphs simultaneously generalize undirected and directed graphs which makes
the study of these four topics particularly interesting.  As we will show in
Section~\ref{sec:bidirected}, the characterization of net-degree sequences is
simpler for bigraphs as compared to graphs and digraphs but the
degree-preserving operations are more involved and combine those for the
undirected and directed case.

The paper is organized as follows. In \emph{Section~\ref{sec:undirected}} we
survey the results for undirected graphs. Our point of view will be more
geometric by highlighting the polytope of degree sequences introduced by
Stanley~\cite{ZGDS}.  This point of view allows for a simple treatment of
extremal degree sequences such as uniquely realizable and tight
sequences, i.e., degree sequences that attain equality in the
Erd\H{o}s--Gallai conditions.  In \emph{Section~\ref{sec:directed}} we give a
coherent treatment of net-degree sequences for directed graphs. Again, our
approach draws from the geometry of net-degree sequences. We introduce the
classes of weakly-split digraphs and width-$2$ posets and show that they
correspond to boundary and uniquely realizable digraphs, respectively. In
particular, we give explicit formulas for the number of width-$2$ posets.
\emph{Section~\ref{sec:bidirected}} is dedicated to bigraphs and we completely
address the four topics above.

For sake of simplicity, we restrict to graphs without loops and parallel edges
but all results have straightforward generalizations to $q$-multigraphs.  Our
aim was to give a self-contained treatment of results. For further background
on graphs and polytopes we refer to the books of Diestel~\cite{Die} and
Ziegler~\cite{Zie}, respectively.

\textbf{Acknowledgements.} We thank the referees for carefully reading our
paper and for making valuable suggestions.
The first author was supported by an \emph{Eva
Wolzendorf Scholarship} at Freie Universit\"{a}t Berlin.  The second author
was supported by the European Research Council under the European Union's
Seventh Framework Programme (FP7/2007-2013) / ERC grant agreement
n$^\mathrm{o}$~247029.

\section{Undirected graphs}\label{sec:undirected}

Erd\H{o}s and Gallai~\cite{EG} gave a complete characterization of degree
sequences of simple graphs: A vector $\ds=(d_1,...,d_n) \in \Z^n_{\ge 0}$ is
the degree sequence of an undirected graph $G$ on vertex set $[n] := \{
1,\dots,n \}$ if and only if 
\begin{align}
    d_1 + d_2 + \cdots + d_n & \text{ is even, and } \label{eqn:d_even}\\
	\sum_{ i \in S} d_i  - \sum_{i \in T} d_i 
	\ \leq \ & |S|(n - |T|-1) \text{ for all disjoint sets } S,T \subseteq [n].
    \label{eqn:EGcondition}
\end{align}

Geometrically, this result states that degree sequences of undirected graphs
on node set $[n]$ are constrained to an intersection of finitely many affine
halfspaces corresponding to~\eqref{eqn:EGcondition}, that is, a convex
polyhedron.  Peled and Srinivasan~\cite{PS} pursued this geometric perspective
and investigated the relation to the polytope obtained by taking the convex
hull of all the finitely many degree sequences in $\R^n$. The drawback of
their construction was that not all integer points in this polytope
corresponded to degree sequences. This was rectified by Stanley~\cite{ZGDS}
with the following modification. For a vector $\ds \in \Z^n$, let us write
$\ext{\ds} = (\ds, \frac{1}{2}(d_1+\cdots+d_n))$. The (extended) polytope of
degree sequences is
\[
    \Pord_n \ := \ \conv \Bigl\{ \ext{\ds}(G) : G = ([n],E) \text{
    simple graph}  \Bigr\} \subset \R^{n+1}.
\]

Write $[p,q]$ for the line segment connecting the points $p,q \in \R^n$. A
polytope $P \subset \R^n$ is a \Defn{zonotope} if it is the pointwise vector
sum of
finitely many line segments.  Stanley's main insight was that $\Pord_n$
is in fact a zonotope. Let $e_1,\dots,e_n$ be the standard basis for $\R^n$.

\begin{lem}[{\cite{ZGDS}}]\label{lem:ord_zono}
    For $n \ge 0$,
    \[
        \Pord_n \ = \ \sum_{1 \le i < j \le n} [0,\ext{e}_i + \ext{e}_j].
    \]
\end{lem}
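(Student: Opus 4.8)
The plan is to realize both sides as the image of a cube under a single linear map and then invoke the standard fact that an affine image of a polytope is the convex hull of the images of its vertices. The starting point is a bookkeeping identity linking graphs to $0/1$-combinations of the generators. Writing $\ext{e}_i = (e_i,\tfrac12)$, each generator equals $\ext{e}_i + \ext{e}_j = (e_i+e_j,1) \in \R^{n+1}$. For a simple graph $G = ([n],E)$ we have $\ds(G)_v = |\{e \in E : v \in e\}| = \sum_{ij \in E}(e_i+e_j)_v$ for every $v$, and the handshake identity gives $\tfrac12\sum_{v}\ds(G)_v = |E|$; hence
\[
    \ext{\ds}(G) \ = \ \sum_{ij \in E}\bigl(\ext{e}_i + \ext{e}_j\bigr).
\]
That is, $\ext{\ds}(G)$ is precisely the $0/1$-combination of the generators that selects the edges of $G$, and conversely every subset $E$ of the edge set $\binom{[n]}{2}$ of $K_n$ arises in this way.

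Next I would introduce the linear map $\pi\colon \R^{\binom{[n]}{2}} \to \R^{n+1}$ sending the standard basis vector indexed by $\{i,j\}$ to $\ext{e}_i + \ext{e}_j$. By the definition of the Minkowski sum of segments, $\sum_{1\le i<j\le n}[0,\ext{e}_i+\ext{e}_j] = \pi\bigl([0,1]^{\binom{[n]}{2}}\bigr)$. The vertices of the cube $[0,1]^{\binom{[n]}{2}}$ are exactly the indicator vectors $\mathbbm{1}_E$ for $E \subseteq \binom{[n]}{2}$, and by the displayed identity $\pi(\mathbbm{1}_E) = \ext{\ds}\bigl(([n],E)\bigr)$. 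Since the image of a polytope under an affine map is the convex hull of the images of its vertices, this gives
\[
    \sum_{1 \le i < j \le n}[0,\ext{e}_i + \ext{e}_j] \ = \ \conv\bigl\{ \ext{\ds}\bigl(([n],E)\bigr) : E \subseteq \tbinom{[n]}{2}\bigr\} \ = \ \Pord_n,
\]
which is the assertion.

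I do not expect a genuine obstacle: everything reduces to the identity $2|E(G)| = \sum_v \ds(G)_v$, which is exactly what makes the last coordinate of $\ext{\ds}(G)$ count edges and hence makes each graph correspond to a vertex of the cube. If one prefers to avoid the projection language, the same proof can be phrased as a double inclusion: every $\ext{\ds}(G)$ lies in the zonotope because it is a $0/1$-combination of the generators, and conversely every vertex of a zonotope $\sum_k[0,v_k]$ is such a $0/1$-combination (a linear functional that is nonzero on all $v_k$ is maximized at the point with coefficient $1$ on the $v_k$ with positive pairing and $0$ otherwise), hence is some $\ext{\ds}(G)$; taking convex hulls gives equality. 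The only case worth isolating explicitly is $n \le 1$, where there are no generators and both sides degenerate to the single point $\ext{\ds}\bigl(([n],\emptyset)\bigr)$.
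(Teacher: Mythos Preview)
Your argument is correct. The paper does not actually prove this lemma in-line; it is quoted from Stanley~\cite{ZGDS} and stated without proof. However, the projection viewpoint you use is exactly the one the paper itself spells out a few paragraphs later in Section~\ref{sec:undirected}, where it identifies $\Pord_n$ with $\pi\bigl([0,1]^{\binom{[n]}{2}}\bigr)$ for the linear map $\pi$ sending $e_{ij}$ to $\ext{e}_i+\ext{e}_j$, so your proof matches the paper's intended reasoning.
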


These particular zonotopes (and their associated hyperplane arrangements)
occur in the study of root systems and signed graphs (see~\cite{Zas81}) which
yields that $\Pord_n$ is given by all points $p \in \R^n$ satisfying the
Erd\"os-Gallai conditions. This line of thought can be completed to a proof of
the Erd\"os--Gallai theorem by showing that for every point $\ext{\ds} \in
\Pord_n \cap \Z^{n+1}$ there is a set $E \subset \binom{[n]}{2}$ such that
\[
\ext{\ds} \ = \ \sum_{ij \in E} \ext{e}_i + \ext{e}_j,
\]
which then shows that $\ds$ is the degree sequence of $G = ([n],E)$.

This makes the treatment of degree sequences amendable to the geometric
combinatorics of zonotopes. In particular, the number of degree sequences of
simple graphs on $n$ vertices is exactly $|\Pord_n \cap \Z^{n+1}|$.  This also
furnishes a generalization of the Erd\H{o}s--Gallai conditions to multigraphs
as obtained by Chungphaisan~\cite{Chung} by purely combinatorial means. From
the geometric perspective, the generalization states that the lattice points
in $q\Pord_n$ bijectively correspond to degree sequences of loopless
$q$-multigraphs. For this, the key observation is that lattice zonotopes are
\emph{normal}, that is, every $\ds \in q\Pord_n \cap \Z^{n+1}$ is the sum of $q$
lattice points in $\Pord_n$.

Using the enumerative theory of lattice points in lattice polytopes,
i.e.~\emph{Ehrhart theory} (see \cite{beck}), it is possible to get
exact counting formulas. A graph $H = ([n],E)$ is a \Defn{quasi-forest} if
every component of $H$ has at most one cycle and the cycle is odd. Define
\[
    h(n,i) \ := \ \sum_{H} \max(1,2^{c(H)-1}),
\]
where the sum ranges over all quasi-forests $H$ on $[n]$ with $i$ edges and $c(H)$ denotes the number of cycles in $H$.  For
the special zonotope $\Pord_n$, Stanley used Ehrhart theory to show the
following.

\begin{cor} [{\cite{ZGDS}}]\label{cor:deg_count} 
    The number of degree sequences of simple graphs with node set $[n]$
    is  $\sum_i h(n,i)$.
\end{cor}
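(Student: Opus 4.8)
The key object is the zonotope $\Pord_n = \sum_{1\le i<j\le n}[0,\ext{e}_i+\ext{e}_j]$ from Lemma~\ref{lem:ord_zono}. The plan is to count $|\Pord_n \cap \Z^{n+1}|$ using the standard combinatorial description of lattice points in a lattice zonotope. Recall that for a zonotope $Z = \sum_{k=1}^m [0,v_k]$ generated by integer vectors $v_1,\dots,v_m$, the number of lattice points is
\[
|Z \cap \Z^d| \ = \ \sum_{I} m(I),
\]
where $I$ ranges over all linearly independent subsets of $\{v_1,\dots,v_m\}$ (including $I=\emptyset$), and $m(I)$ is the relative volume / index of the sublattice generated by $\{v_k : k\in I\}$ inside the lattice it spans — equivalently, the number of lattice points in the half-open parallelepiped spanned by $I$. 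This is the lattice-point analogue of McMullen's/Stanley's decomposition of a zonotope into half-open parallelepipeds indexed by independent subsets of the generators, and it is exactly the tool used in \cite{ZGDS}.

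The generators here are $v_{ij} = \ext{e}_i + \ext{e}_j = e_i + e_j + e_{n+1}$, one for each edge $ij$ of the complete graph $K_n$. So independent subsets of generators correspond to subgraphs $H = ([n],E)$ of $K_n$ whose incidence-type vectors $\{v_{ij} : ij\in E\}$ are linearly independent. The first step is to identify these subgraphs: I claim a set of edges $E$ gives linearly independent vectors $v_{ij}$ precisely when $H=([n],E)$ is a quasi-forest, i.e.\ every connected component contains at most one cycle and every such cycle has odd length. The mechanism is the classical fact that for the ordinary edge-vectors $e_i+e_j$ of a graph (the "signless" incidence vectors), a set is independent iff each component has at most one cycle and that cycle is odd; here the extra constant coordinate $e_{n+1}$ does not change independence because it is shared by all generators — one checks that a linear dependence among the $v_{ij}$ forces the coefficient sum to vanish on each component, reducing to the known criterion for $e_i+e_j$. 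This needs to be stated and verified carefully; it is the main structural input.

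The second step is to compute $m(E)$ for a quasi-forest $H$ with edge set $E$. If $H$ is a forest (no cycle), the vectors $v_{ij}$ extend to a lattice basis and $m(E)=1$. If $H$ has $c=c(H)$ components containing an odd cycle, one shows the half-open parallelepiped contains $2^{c}$ lattice points when... — more precisely, the index contributed is $\max(1, 2^{c(H)-1})$: a single odd cycle (together with attached trees) contributes index $2$ because $\sum_{ij\in C}(-1)^{?}v_{ij}$ produces a relation modulo $2$, and $c$ independent such cycles multiply to $2^{c}$, but the global even-sum constraint coming from the $(n+1)$st coordinate halves this to $2^{c-1}$ once $c\ge 1$. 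Carrying out this index computation — diagonalizing the relevant integer matrix, or directly counting lattice points in the fundamental parallelepiped — is where the bookkeeping lives. Summing $m(E)$ over all quasi-forests with $i$ edges gives exactly $h(n,i)$, and summing over $i$ gives $|\Pord_n\cap\Z^{n+1}|$, which by Lemma~\ref{lem:ord_zono} and the discussion preceding the corollary equals the number of degree sequences. The main obstacle is the precise determination of the sublattice index $m(E)=\max(1,2^{c(H)-1})$, including getting the "$-1$" in the exponent right; everything else is assembling known facts about zonotopes and graphic-type matroids.
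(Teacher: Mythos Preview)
The paper does not give its own proof of this corollary: it is stated as a cited result from Stanley~\cite{ZGDS}, with only the one-line remark that ``Stanley used Ehrhart theory to show the following.'' So there is no in-paper argument to compare against. Your proposal is a faithful reconstruction of Stanley's original proof via the half-open parallelepiped decomposition of a lattice zonotope, and the overall strategy is correct.

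One small imprecision worth tightening: your reason for why the extra coordinate $e_{n+1}$ does not affect linear independence is slightly backwards. The clean argument is that in the $(n+1)\times\binom{n}{2}$ matrix whose columns are the $v_{ij}=e_i+e_j+e_{n+1}$, the last row is exactly half the sum of the first $n$ rows (each edge contributes $1$ to two of the first $n$ coordinates and $1$ to the last). Hence the column rank, and therefore the collection of independent subsets, is identical to that of the $n\times\binom{n}{2}$ matrix with columns $e_i+e_j$, which gives the quasi-forest characterization directly. Your phrasing (``a linear dependence among the $v_{ij}$ forces the coefficient sum to vanish on each component'') is not quite the mechanism and could confuse a reader. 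Everything else---including the identification of the sublattice index as $\max(1,2^{c(H)-1})$ and the acknowledgment that this is where the real computation sits---is on target.
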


A degree sequence $\ds$ is called \Defn{tight} if $\ds$ attains equality in
one of the Erd\H{o}s-Gallai conditions, i.e., if $\ext{\ds}$ is in the boundary
of $\Pord_n$. A graph $G = ([n],E)$ is \Defn{split} if there is a partition $V
= V_c \uplus V_i$ such that $V_c$ is a clique and $V_i$ is independent.  We
call $G$ \Defn{weakly split} if there is a partition $V = V_c \uplus V_i \uplus
V_o$ such that the restriction to $V_c \cup V_i$ is a non-empty split graph,
every node of $V_o$ is adjacent to every node of $V_c$ but to no node of
$V_i$.
\begin{thm}\label{thm:undir_split}
    A degree sequence $\ds$ is tight if and only if it is the degree sequence
    of a weakly-split graph.  The number of tight degree sequences of simple graphs on $n$ vertices is 
    \[
        2 \sum_{n-i \text{ odd}}  h(n,i).
    \]
\end{thm}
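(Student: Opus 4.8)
The plan is to exploit the zonotopal structure of $\Pord_n$ from Lemma~\ref{lem:ord_zono} and the fact, quoted above, that $\ext{\ds}$ lies in the boundary of $\Pord_n$ exactly when $\ds$ is tight. First I would recall the standard description of the faces of a zonotope $\sum_i [0,v_i]$: each face is obtained by choosing a linear functional $c$, setting $Z^+ = \{i : \langle c,v_i\rangle > 0\}$, $Z^0 = \{i : \langle c,v_i\rangle = 0\}$, $Z^- = \{i : \langle c,v_i\rangle < 0\}$, and forming $\sum_{i \in Z^+} v_i + \sum_{i \in Z^0}[0,v_i]$. Here the generators are $v_{ij} = \ext{e}_i + \ext{e}_j$ for $1 \le i < j \le n$, and a functional on $\R^{n+1}$ restricted to these generators is determined (up to the irrelevant last coordinate) by a vector $c = (c_1,\dots,c_n)$ together with a scalar, so $\langle c, v_{ij}\rangle$ compares $c_i + c_j$ against a threshold. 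Sorting the $c_i$, one sees that the index set $\binom{[n]}{2}$ splits into pairs $ij$ with $c_i+c_j$ large (forced edges), pairs with $c_i+c_j$ at the threshold (free edges), and pairs with $c_i + c_j$ small (forbidden edges). I would check that this trichotomy on pairs is precisely encoded by a partition $V = V_c \uplus V_i \uplus V_o$: $V_c$ consists of the high-$c$ vertices (all mutual edges forced, i.e.\ a clique), $V_i$ of the low-$c$ vertices (all mutual edges forbidden, i.e.\ independent), with the threshold falling so that $V_c$--$V_i$ edges are free and $V_o$ absorbs the remaining vertices adjacent to all of $V_c$ and none of $V_i$; the "non-empty split graph on $V_c \cup V_i$" condition comes from requiring the face to be proper. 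Conversely, from any weakly-split partition one reads off a suitable $c$, so the lattice points of boundary faces of $\Pord_n$ are exactly the $\ext{\ds}$ with $\ds$ a degree sequence of a weakly-split graph, giving the first assertion.

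For the count, the number of tight degree sequences is the number of lattice points on $\partial \Pord_n$, equivalently $|\Pord_n \cap \Z^{n+1}| - |\relint(\Pord_n) \cap \Z^{n+1}|$. By Ehrhart–Macdonald reciprocity, $|\relint(\Pord_n) \cap \Z^{n+1}| = (-1)^{\dim \Pord_n}\, \mathrm{ehr}_{\Pord_n}(-1)$, and since $\Pord_n$ is a lattice zonotope its Ehrhart polynomial factors combinatorially: expanding $\mathrm{ehr}_{\Pord_n}(t) = \sum_i h(n,i)\, t^i$ as in Corollary~\ref{cor:deg_count} (this is exactly Stanley's computation, where $h(n,i)$ counts $i$-subsets of generators that are linearly independent, weighted by $2^{c-1}$ for the number of ways they arise — parametrised by quasi-forests), reciprocity gives $|\relint(\Pord_n)\cap\Z^{n+1}| = \sum_i (-1)^{n-i} h(n,i) = \sum_{n-i \text{ even}} h(n,i) - \sum_{n-i \text{ odd}} h(n,i)$. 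Subtracting from $|\Pord_n \cap \Z^{n+1}| = \sum_i h(n,i)$ yields $2\sum_{n-i \text{ odd}} h(n,i)$, as claimed.

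The main obstacle is the combinatorial bookkeeping in the first part: one must verify carefully that every proper face of the zonotope yields a genuinely weakly-split partition (in particular that the induced split graph on $V_c \cup V_i$ is non-empty, which corresponds to the functional $c$ being non-constant so that the face is proper) and, conversely, that every weakly-split graph's degree sequence is attained on such a face — this requires matching the three-way case analysis $c_i + c_j \gtreqless$ threshold against the three adjacency regimes among $V_c, V_i, V_o$. The Ehrhart-reciprocity half is comparatively routine once one grants Stanley's polynomial $\sum_i h(n,i) t^i$; the only point to be careful about is that $\Pord_n$ is full-dimensional in its affine span $\{x_{n+1} = \tfrac12(x_1 + \cdots + x_n)\}$, of dimension $n$, so the sign in reciprocity is $(-1)^n$, producing the parity condition $n - i$ odd in the final formula.
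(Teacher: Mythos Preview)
Your enumerative half is exactly the paper's argument: it too writes the boundary count as $|\Pord_n\cap\Z^{n+1}|-|\relint(\Pord_n)\cap\Z^{n+1}|$, invokes Ehrhart--Macdonald reciprocity with the sign $(-1)^n$, and simplifies to $2\sum_{n-i\text{ odd}}h(n,i)$.

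For the characterization, you take a genuinely different route. The paper does not analyze faces of the zonotope at all; it works directly from the tight Erd\H{o}s--Gallai condition~\eqref{eqn:EGcondition}: given equality for some pair $(S,T)$, set $V_c=S$, $V_i=T$, $V_o=[n]\setminus(S\cup T)$, write $\sum_{i\in S}d_i-\sum_{i\in T}d_i$ as $2N_{cc}+N_{co}-2N_{ii}-N_{io}$ in terms of edge counts $N_{ab}$ between the parts, and read off that equality forces $N_{cc},N_{co}$ maximal and $N_{ii},N_{io}$ zero, i.e.\ weakly split. This is a two-line computation once the partition is named.

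Your zonotope-face approach is sound in spirit but has a gap as written. It is \emph{not} true that the forced/free/forbidden trichotomy coming from an arbitrary functional $c$ is encoded by a tripartition $V_c\uplus V_i\uplus V_o$ of weakly-split type. For instance, with $n=5$, $c=(4,3,2,1,0)$ and threshold $4$, the forced edges are $\{12,13,14,23\}$ and the forbidden edges are $\{25,34,35,45\}$; no choice of $V_c,V_i,V_o$ makes the $V_c$--$V_i$ edges exactly the free set $\{15,24\}$ while matching the rest. What saves you is that every boundary point lies in some \emph{facet}, and the facets of $\Pord_n$ are precisely the Erd\H{o}s--Gallai hyperplanes, whose functionals take values in $\{-1,0,+1\}$ on the first $n$ coordinates; for those functionals your tripartition does work (and is simply $V_c=S$, $V_i=T$, $V_o$ the rest). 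Once you make that restriction, your argument and the paper's coincide. So the fix is easy, but the step ``extract $V_c,V_i,V_o$ from a general $c$'' would fail, and you should replace it by ``pass to a facet containing the point''.
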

\begin{proof}
    Let $\ds \in \Pord_n$ be a degree sequence and let $G=([n],E)$ be one of
    its realizations. It follows from the Erd\H{o}s--Gallai
    condition~\eqref{eqn:EGcondition} that $\ds$ is tight if and only if there
    is a partition $V_c \uplus V_i \uplus V_o$ of the node set such that 
    \begin{equation}\label{eqn:ordinary_weak-split}
    	|V_c|(|V_c|-1) + |V_c||V_o| 
    	\ = \ \sum_{ i \in V_c} d_i  - \sum_{i \in V_i} d_i. 
    \end{equation}

    Let us define $N_{ab}$ for the number of edges between $V_a$ and $V_b$ for
    $a,b \in \{i,c,o\}$. Then the right hand side can be written as
    \[
        2N_{cc} + N_{ci} +N_{co} - (2N_{ii}+N_{ic}+N_{io}) \ =  \ 2 N_{cc} +
        N_{co}- 2N_{ii} - N_{io}.
	\]
    Hence, $G$ satisfies~\eqref{eqn:ordinary_weak-split} if only if
    $N_{cc}$ and $N_{co}$ are maximal and $N_{ii}$ as well as $N_{io}$ are
    minimal. That is, $2 N_{cc} = |V_c|(|V_c|-1)$, $N_{co} = |V_c||V_o|$, and
    $N_{ii}=N_{io}=0$. This, however, is the case if and only if $G$ is
    weakly split.

    Geometrically, a sequence $\ds$ is weakly split if and only if $\ds \in
    \partial \Pord_n \cap \Z^n$. It follows from Ehrhart theory
    (see~\cite{beck}) that 
    \[
        |\relint{\Pord_n} \cap \Z^n| \ = \ h(n,n) - h(n,n-1) + \cdots + (-1)^n
        h(n,0),
    \]
    where $\relint{\Pord_n}$ denotes the relative interior, i.e., the points
    in the interior of $\Pord_n$ relative to the full-dimensional embedding in
    its affine hull.  Hence, together with Corollary~\ref{cor:deg_count}, we
    get
    \[
        |\partial\Pord_n \cap \Z^n | \ = \
        |\Pord_n \cap \Z^n | -
        |\relint{\Pord_n} \cap \Z^n | \ = \
        2 \sum_{n-i \text{ odd}}  h(n,i).
        \qedhere
    \]
\end{proof}

Havel~\cite{Havel} and Hakimi~\cite{Hakimi} showed that any two simple graphs
on the same node set with the same degree sequence can be transformed into
each other via a finite sequence of $2$-switches: In a graph $G=(V,E)$
containing four vertices $u,v,w,x\in V$ with $uv, wx \in E$ and $uw, vx \notin
E$, the operation replacing  $uv$ and $wx$ by $uw$ and $vx$ is called a
\Defn{$\boldsymbol 2$-switch} or \Defn{$\boldsymbol \Sigma$-operation} (see Figure~\ref{Figure2Switch}).

\begin{figure}[ht] 
\centering
\begin{overpic}[width=.1\textwidth]{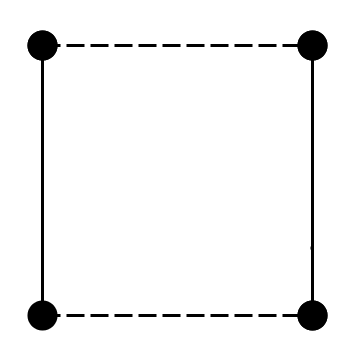}
\put(-12,85){u}
\put(-12,5){v}
\put(100,85){w}
\put(99,5){x}
\end{overpic}
\quad
\raisebox{.7cm}
{
	\scalebox{2.0}{$\longleftrightarrow$}
}
\quad
\begin{overpic}[width=.1\textwidth]{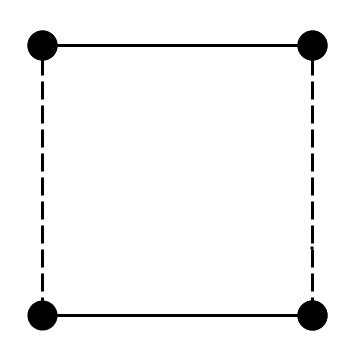}
\put(-12,85){u}
\put(-12,5){v}
\put(100,85){w}
\put(99,5){x}
\end{overpic}
\caption{Example of a $2$-switch or $\Sigma$-operation.}
\label{Figure2Switch}
\end{figure}

A degree sequence $\ds$ is called \Defn{uniquely realizable} if there is a
unique graph $G = ([n],E)$ with $\ds = \ds(G)$, that is, if no $2$-switch can
be applied to $G$. From the results of Havel--Hakimi, it follows that the
uniquely realizable degree sequences are exactly those coming from threshold
graphs. A graph $G=([n],E)$ is a \Defn{threshold graph} if $G$ has a single
node or there is a node $v \in [n]$ such that either $v$ is isolated or
connected to all remaining vertices and $G - v$ is a threshold graph.

The vertices of the $\binom{n}{2}$-dimensional cube $[0,1]^{\binom{n}{2}}$ are
the characteristic vectors of subsets of $\binom{[n]}{2}$ and hence are in
bijection with simple graphs on the node set $V = [n]$. If $\chi_E \in
\{0,1\}^{\binom{[n]}{2}}$ represents $G = ([n],E)$, then its degree sequence
is the image under the linear projection $\pi$ that takes $e_{ij}$ to
$\ext{e}_i + \ext{e}_j$ and $\Pord_n = \pi([0,1]^{\binom{n}{2}})$.  Thus, $\ds$
is uniquely realizable if $\pi^{-1}(\ext{\ds}) \cap [0,1]^{\binom{n}{2}}$
contains a unique lattice point. It is easy to see that this happens when
$\ext{\ds}$ is a vertex of $\Pord_n$. Vertices of $\Pord_n$ are easy to
describe.  The sequence $\ext{\ds}(G)$ is a vertex if and only if there exists a
vector $c=(c_1, c_2, \ldots,c_n)\in \R^n$ such that $ij$ is an edge of $G$ if
and only if $c_i + c_j > 0$ for every $1 \le i < j \le n$ (see e.g.
\cite[Section 2]{PS}). For undirected graphs, it turns out that $\ds$ is
uniquely realizable if and only if $\ext{\ds}$ is a vertex of $\Pord_n$, as was
shown by Koren~\cite{Kor}. As we will see in the next section, this is special
to the undirected situation.  The number of vertices (and hence the number of
threshold graphs) was determined by Beissinger and Peled~\cite{BeissingerPeled},
the number of all faces is given in~\cite{ZGDS}.

\section{Directed graphs} \label{sec:directed}

Let $D,D'$ be two digraphs on the node set $[n]$ that differ only by a single
directed edge connecting nodes $i$ and $j$. Then the difference of their
degree sequences will be $\pm(e_i - e_j)$. Following the geometric approach
laid out in the previous section, we are led to consider the zonotope
\[
    \Pdir_n \ := \ \sum_{1 \le i < j \le n } [e_i - e_j,e_j-e_i].
\]
This is a polytope that is full-dimensional in the $n-1$ dimensional linear
subspace given by all $\ds \in \R^n$ with $d_1 + \cdots + d_n = 0$.  It is clear
from the definition that it contains all degree sequences of directed graphs
$D = ([n],A)$. The polytope is invariant under permuting coordinates and, in
fact, it is the convex hull of all permutations of the point
\begin{equation} \label{Eq_q-vector}
    (-n+1, -n+3, -n+5, \ldots, n-5, n-3, n-1).
\end{equation}
This shows that $\Pdir_n = 2 \Pi_{n-1} - (n+1) \mathbf{1}$ where $\Pi_{n-1}$
is the well-known permutahedron; see~\cite{Zie}. In particular, it is
straightforward to show that for every $\ds \in \Pdir_n \cap \Z^n$, there is
an $A \subset [n] \times [n]$ such that 
\[
    \ds \ = \ \sum_{(i,j) \in A} e_i - e_j
\]
and $A$ gives rise to a simple digraph $D$ with $\ds = \ds(D)$. The vertices
of $\Pdir_n$ are in bijection with the net-degree sequences of tournaments on
$n$ vertices.

An inequality description of $\Pi_{n-1}$ is well-known
(cf.~\cite[Example~9.8]{Zie} or part (iii) of the theorem below) and we
conclude an Erd\H{o}s--Gallai-type result for directed graphs.
\begin{thm}\label{thm:dErdGal}
    For $\ds \in \Z^n$ the following conditions are equivalent:
    \begin{enumerate}[\rm (i)]
        \item There is a (simple) digraph $D = ([n],A)$ with net-degree
            sequence $\ds = \ds(D)$;
        \item $\ds \in \Pdir_n$;
        \item $\ds$ satisfies $d_1 + \cdots + d_n = 0$ and 
            \[
                \sum_{i \in I} d_i \ \leq \  |I|(n-|I|) 
            \]
            for all $\emptyset \neq I \subsetneq [n]$.
    \end{enumerate}
\end{thm}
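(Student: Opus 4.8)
The plan is to prove Theorem~\ref{thm:dErdGal} by establishing the cycle of implications (i) $\Rightarrow$ (iii) $\Rightarrow$ (ii) $\Rightarrow$ (i), exploiting at each step the identification $\Pdir_n = 2\Pi_{n-1} - (n+1)\mathbf{1}$ together with the combinatorial interpretation of the generators $e_i - e_j$. The implication (i) $\Rightarrow$ (iii) is the easy direction: if $\ds = \ds(D)$ for a simple digraph $D = ([n],A)$, then $\sum_v \ds(D)_v = 0$ since every arc $(u,v)$ contributes $+1$ to $\ds_v$ and $-1$ to $\ds_u$; and for any proper nonempty $I \subsetneq [n]$, the sum $\sum_{i \in I} d_i$ counts arcs entering $I$ minus arcs leaving $I$ (arcs internal to $I$ or to its complement cancel), and the number of arcs entering $I$ is at most $|I|(n-|I|)$ since each such arc is determined by its tail in $[n] \setminus I$ and head in $I$, with no parallel arcs.

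For (ii) $\Rightarrow$ (i), I would invoke the remark already made in the excerpt: since $\Pdir_n$ is a translate and dilate of the permutahedron $\Pi_{n-1}$, and permutahedra are lattice zonotopes hence normal, every lattice point $\ds \in \Pdir_n \cap \Z^n$ can be written as $\ds = \sum_{(i,j) \in A}(e_i - e_j)$ for some $A \subseteq [n] \times [n]$ (here one uses that the generating segments $[e_i - e_j, e_j - e_i]$ have integer endpoints and that a zonotope generated by segments $[0, v_k]$ with a unimodular-type structure — in fact the graphical/root-system structure of type $A$ — is normal). One must then argue that $A$ can be taken to be a simple arc set, i.e., that for each pair $i,j$ at most one of $(i,j), (j,i)$ appears and it appears with multiplicity one: this follows because $e_i - e_j$ and $e_j - e_i$ cancel, so we may assume only one orientation is used for each pair, and repeated use of $e_i - e_j$ would push the corresponding partial sum outside the box constraints coming from the zonotope decomposition — alternatively, one cites the earlier displayed claim in the excerpt that makes exactly this assertion, so this step can be kept short.

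The implication (iii) $\Rightarrow$ (ii) is where the real content lies, and it is essentially a matter of matching the inequalities in (iii) against the known facet description of the permutahedron. The standard description (cf.~\cite[Example~9.8]{Zie}) is that $\Pi_{n-1}$ (on vertex set of permutations of $(1,2,\dots,n)$, say) is the set of $x \in \R^n$ with $\sum x_i = \binom{n+1}{2}$ and $\sum_{i \in I} x_i \ge \binom{|I|+1}{2}$ for all $\emptyset \neq I \subsetneq [n]$. Applying the affine map $x \mapsto 2x - (n+1)\mathbf{1}$ to go from $\Pi_{n-1}$ to $\Pdir_n$, the hyperplane $\sum x_i = \binom{n+1}{2}$ becomes $\sum d_i = 0$, and the inequality $\sum_{i \in I} x_i \ge \binom{|I|+1}{2}$ becomes $\sum_{i \in I}(d_i + n + 1) \ge 2\binom{|I|+1}{2} = |I|(|I|+1)$, i.e.\ $\sum_{i \in I} d_i \ge |I|(|I|+1) - |I|(n+1) = -|I|(n - |I|)$. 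By symmetry (replacing $I$ by its complement and using $\sum d_i = 0$) this lower bound is equivalent to the upper bound $\sum_{i \in I} d_i \le |I|(n - |I|)$ stated in (iii), so the inequality systems coincide and (iii) $\Leftrightarrow$ $\ds \in \Pdir_n$. The main obstacle, then, is purely bookkeeping: getting the affine transformation and the binomial-coefficient arithmetic exactly right, and being careful that the inequality description of $\Pi_{n-1}$ one cites uses the vertex normalization $(1,\dots,n)$ that is consistent with the generator description $\Pdir_n = \sum_{i<j}[e_i - e_j, e_j - e_i]$ and the vertex $(-n+1, -n+3, \dots, n-1)$ exhibited in~\eqref{Eq_q-vector}. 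There is no deep difficulty; the one subtlety to flag is normality of the zonotope in step (ii) $\Rightarrow$ (i), but since $\Pdir_n$ is a root-system zonotope of type $A$ this is classical and is also asserted earlier in the text.
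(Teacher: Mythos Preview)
Your proposal is correct and matches the paper's own treatment, which likewise derives (ii)~$\Leftrightarrow$~(iii) from the standard facet description of $\Pi_{n-1}$ under the affine identification $\Pdir_n = 2\Pi_{n-1} - (n+1)\mathbf{1}$ and handles (ii)~$\Rightarrow$~(i) via the displayed decomposition $\ds = \sum_{(i,j)\in A}(e_i-e_j)$ that you cite. The only point to tighten is the word \emph{normal} in step (ii)~$\Rightarrow$~(i): normality concerns dilates, whereas what is actually used is that every lattice point of the zonotope is a sum of one lattice point from each generating segment, which holds because the type-$A$ generators $e_i-e_j$ form a unimodular system---precisely the fact you already gesture at in your parenthetical.
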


For $\ds$ in non-increasing order, Theorem~\ref{thm:dErdGal}(iii) yields the
conditions found by Pirzada et al.~\cite{Pirzada}.

A simple digraph $D=(V,A)$ is \Defn{weakly split} if there is a partition $V =
V_s \uplus V_t$  such that $V_s \times V_t \subseteq A$. This class of
digraphs gives us a characterization of the tight net-degree sequences.

\begin{prop} \label{prop:Pdir_tight_weak-split}
    A vector $\ds \in \Zn$ is contained in the boundary of $\Pdir_n$ if and
    only if it is the degree sequence of a weakly split digraph.
\end{prop}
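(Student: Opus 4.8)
The plan is to run the same argument as in the proof of Theorem~\ref{thm:undir_split}, transported to the directed setting via the inequality description in Theorem~\ref{thm:dErdGal}(iii). A vector $\ds \in \Pdir_n$ lies in the boundary if and only if it attains equality in one of the facet-defining inequalities $\sum_{i \in I} d_i \le |I|(n-|I|)$ for some $\emptyset \neq I \subsetneq [n]$ (the equality $d_1 + \cdots + d_n = 0$ is the affine hull and does not count). So I would fix a realization $D = ([n],A)$ of $\ds$ and a nonempty proper $I \subsetneq [n]$, set $J := [n] \setminus I$, and rewrite $\sum_{i \in I} d_i$ in terms of arc counts across the partition $I \uplus J$.

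The key computation is the following. Write $A_{XY}$ for the number of arcs $(u,v) \in A$ with $u \in X$, $v \in Y$. Then
\[
    \sum_{i \in I} d_i \ = \ \bigl(A_{II} + A_{JI}\bigr) - \bigl(A_{II} + A_{IJ}\bigr) \ = \ A_{JI} - A_{IJ},
\]
since the arcs internal to $I$ cancel. The arcs between $I$ and $J$ contribute $A_{IJ} + A_{JI}$ ordered pairs, and between any two distinct nodes there is at most one arc, so $A_{IJ} + A_{JI} \le |I||J| = |I|(n-|I|)$; combined with $A_{IJ} \ge 0$ this gives $A_{JI} - A_{IJ} \le |I|(n-|I|)$, recovering the inequality. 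Equality holds precisely when $A_{IJ} = 0$ and $A_{JI} = |I||J|$, i.e.\ $J \times I \subseteq A$ and no arc goes from $I$ to $J$. Renaming $V_t := I$ and $V_s := J$, this says exactly that $V_s \times V_t \subseteq A$, which is the definition of $D$ being weakly split with respect to the partition $V = V_s \uplus V_t$.

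This establishes both directions at once: if $\ds$ is in the boundary of $\Pdir_n$, pick a tight inequality, and the argument above produces a partition witnessing that the chosen realization $D$ is weakly split; conversely, if some realization $D$ of $\ds$ is weakly split with partition $V_s \uplus V_t$, then taking $I = V_t$ makes the corresponding inequality tight, so $\ext{\ds} = \ds$ lies on a facet. I do not expect a serious obstacle here — the whole statement is essentially a transcription of the combinatorial meaning of equality in the permutahedron facet inequalities. The one point requiring a line of care is handling the degenerate cases, namely that $I = \emptyset$ and $I = [n]$ must be excluded (they correspond to the defining equation, not to boundary facets), and noting that weak-splitness allows one of $V_s, V_t$ to be empty, which matches the fact that those trivial partitions impose no constraint. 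Since being in the boundary is a property of the point $\ds$ whereas being weakly split is a property of a realization, I would also remark that the forward direction yields a \emph{specific} weakly split realization, which is all that the statement asks.
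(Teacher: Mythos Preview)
Your proposal is correct and follows essentially the same route as the paper's proof: identify boundary points with equality in some facet inequality $\sum_{i\in I} d_i = |I|(n-|I|)$, then observe that for any realization $D$ this equality forces $([n]\setminus I)\times I \subseteq A$. You simply spell out the arc-counting identity $\sum_{i\in I} d_i = A_{JI} - A_{IJ}$ that the paper leaves implicit; one small caveat is that your closing remark about weak-splitness ``allowing'' an empty part is slightly off, since an empty $V_s$ or $V_t$ would make every digraph trivially weakly split and break the equivalence---both parts must be nonempty, matching the restriction $\emptyset \neq I \subsetneq [n]$.
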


\begin{proof}
    By Theorem~\ref{thm:dErdGal}(iii), $\ds \in \partial\Pdir_n$ 
    if and only if 
    \[
        \sum_{i \in V_t}d_i \ = \  |V_t|(n-|V_t|) 
    \]
    for some nonempty subset $V_t \subset [n]$. Equality holds for a
    realization $D = ([n],A)$ if and only if $V_s \times V_t \subseteq A$ for
    $V_s = [n] \setminus V_t$.
\end{proof}

Using Exercise~4.64 in~\cite{EC1} together with Theorem~\ref{thm:dErdGal}, we
obtain the following enumerative results regarding net-degree sequences. We
write $f(n,i)$ for the number of forests on $n$ labelled vertices with $i$
edges.


\begin{cor}
    The number of net-degree sequences of digraphs on $n$ labelled nodes is
    \[
        \sum_{i=0}^{n-1} 2^i f(n,i).
    \]
    The number of degree sequences of weakly-split digraphs on $n$ vertices
    equals
    \[
    \sum_{i=0}^{ \frac{n-2}{2} } 2^{2i+1} f(n,2i) \quad \text{if
    $n$ is even} \qquad \text{ and } \qquad
    \sum_{i=1}^{\frac{n-1}{2}} 2^{2i} f(n,2i-1) \quad \text{if
    $n$ is odd}.
    \]
\end{cor}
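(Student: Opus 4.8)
The plan is to reduce both counts to lattice-point enumeration in the zonotope $\Pdir_n$ and its boundary, using the permutahedron description already recorded. By Theorem~\ref{thm:dErdGal}, the net-degree sequences of digraphs on $[n]$ are exactly the points of $\Pdir_n \cap \Zn$, and by Proposition~\ref{prop:Pdir_tight_weak-split}, the degree sequences of weakly-split digraphs are exactly the points of $\partial\Pdir_n \cap \Zn$. The main input is the identity $\Pdir_n = 2\Pi_{n-1} - (n+1)\mathbf{1}$: since the translation vector lies in $\Zn$, this is an integral affine isomorphism, so it preserves lattice-point counts and maps relative interior to relative interior and boundary to boundary. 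Thus I may replace $\Pdir_n$ by the dilated permutahedron $2\Pi_{n-1}$ throughout and invoke the fact (Exercise~4.64 in~\cite{EC1}) that the Ehrhart polynomial of $\Pi_{n-1}$ equals $L_{\Pi_{n-1}}(t) = \sum_{F} t^{|F|} = \sum_{i=0}^{n-1} f(n,i)\,t^i$, where $F$ ranges over the forests on the vertex set $[n]$.

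The first formula is then immediate: $|\Pdir_n \cap \Zn| = |2\Pi_{n-1} \cap \Zn| = L_{\Pi_{n-1}}(2) = \sum_{i=0}^{n-1} 2^i f(n,i)$. For the second, I would start from
\[
    |\partial\Pdir_n \cap \Zn| \ = \ |\Pdir_n \cap \Zn| - |\relint{\Pdir_n} \cap \Zn|
\]
and evaluate the relative-interior term by \emph{Ehrhart--Macdonald reciprocity}, applied inside the $(n-1)$-dimensional affine hull of $\Pi_{n-1}$ with the sublattice induced from $\Zn$:
\[
    |\relint{\Pdir_n} \cap \Zn| \ = \ |\relint{2\Pi_{n-1}} \cap \Zn| \ = \ (-1)^{n-1} L_{\Pi_{n-1}}(-2) \ = \ \sum_{i=0}^{n-1} (-1)^{n-1+i}\, 2^i f(n,i).
\]
Subtracting yields $|\partial\Pdir_n \cap \Zn| = \sum_{i=0}^{n-1}\bigl(1 - (-1)^{n-1+i}\bigr) 2^i f(n,i)$; here the coefficient equals $2$ exactly when $n-i$ is even and $0$ otherwise. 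Discarding the vanishing terms and re-indexing the survivors — writing $i = 2j$ with $0 \le j \le \frac{n-2}{2}$ when $n$ is even, and $i = 2j-1$ with $1 \le j \le \frac{n-1}{2}$ when $n$ is odd, and using that $f(n,i) = 0$ for $i > n-1$ — produces exactly the two claimed expressions.

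I expect the only genuine obstacle to be the Ehrhart bookkeeping for polytopes that are not full-dimensional in $\R^n$: one must take $L_{\Pi_{n-1}}$, its value at negative arguments, and the reciprocity statement relative to the affine hyperplane $\{\,x : \sum_i x_i = \text{const}\,\}$ and the rank-$(n-1)$ lattice it inherits from $\Zn$, and one must check that the normalization in Exercise~4.64 of~\cite{EC1} agrees with ours (otherwise a rescaling is needed, e.g.\ if it is stated for the standard permutahedron rather than for a dilate). It is also worth double-checking that $\ds \mapsto \ds + (n+1)\mathbf{1}$ really is a bijection $\Pdir_n \cap \Zn \to 2\Pi_{n-1} \cap \Zn$ compatible with relative interior and boundary, but this is clear because it is an integral translation. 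Everything beyond these points is the routine parity computation sketched above.
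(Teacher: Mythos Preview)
Your proposal is correct and follows essentially the same route as the paper: identify net-degree sequences with lattice points of $\Pdir_n$, use the integral translation $\Pdir_n = 2\Pi_{n-1} - (n+1)\mathbf{1}$, invoke the Ehrhart polynomial of $\Pi_{n-1}$ from~\cite[Exercise~4.64]{EC1} at $t=2$, and then apply Ehrhart--Macdonald reciprocity and subtract to obtain the boundary count. Your additional caveats about the non-full-dimensional setting and the normalization of the permutahedron are legitimate sanity checks but do not alter the argument.
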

\begin{proof}
    By Exercise~4.64 of~\cite{EC1}, the number of lattice points in $m \cdot
    \Pi_{n-1}$ for $m \ge 0$ is given by 
    \[
        E_{n-1}(m) \ = \  f(n,n-1)m^{n-1} + f(n,n-2) m^{n-2}\cdots + f(n,0)\,.
    \]
    Since $\Pdir_n$ and $2\Pi_{n-1}$ differ only by a lattice translation, the
    number of net-degree sequences is $E_{n-1}(2)$.
    By Ehrhart--Macdonald reciprocity (see~\cite[Ch.~4]{beck}), the number of
    lattice points in the relative interior of $m \Pi_{n-1}$ for $m>0$ is
    $(-1)^{n-1}E_{n-1}(-m)$. Hence, the number of degree sequences of
    weakly-split digraphs is $E_{n-1}(2) - (-1)^{n-1}E_{n-1}(-2)$ and the
    result follows.
%
\end{proof}
Two obvious operations that leave the net-degree sequence of a digraph $D =
(V,A)$ unchanged are the following: A \Defn{$\boldsymbol \Delta$-operation}
either adds a directed triangle to three pairwise non-adjacent nodes or
removes a directed triangle.  Likewise, we may replace an oriented edge
$(u,w)$ with two edges $(u,v),(v,w)$ where $v \in V$ is a node not adjacent to either
$u$ or $w$. This or its inverse is called a \Defn{$\boldsymbol
\Lambda$-operation}. See Figure~\ref{fig:dir-ops} for an illustration of both
operations. The following result is taken from~\cite{Pirzada}.  

\begin{figure}[ht]
\begin{center}
\begin{overpic}[width=.08\textwidth]{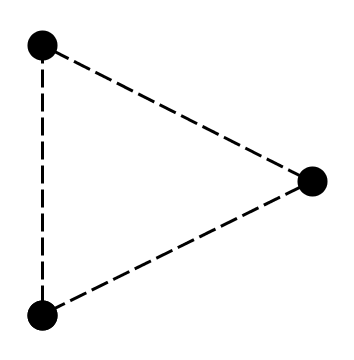}
\end{overpic}
\quad
\raisebox{0.5cm}
{
	\scalebox{1.5}{$\longleftrightarrow$}
}
\quad
\begin{overpic}[width=.08\textwidth]{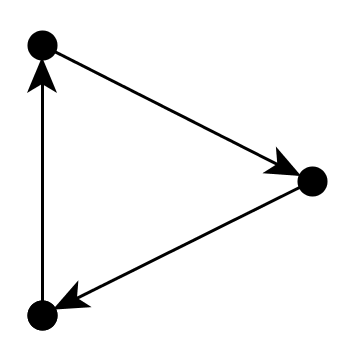}
\end{overpic}
\qquad \qquad
\begin{overpic}[width=.08\textwidth]{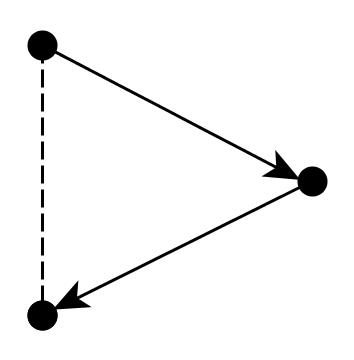}
\put(-17,85){u}
\put(-17,5){w}
\put(100,45){v}
\end{overpic}
\quad
\raisebox{0.5cm}
{
	\scalebox{1.5}{$\longleftrightarrow$}
}
\quad
\begin{overpic}[width=.08\textwidth]{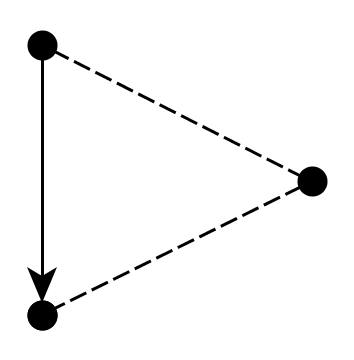}
\put(-17,85){u}
\put(-17,5){w}
\put(100,45){v}
\end{overpic}
\caption{The two net-degree preserving operations for digraphs.}
\label{fig:dir-ops}
\end{center}
\end{figure}

\begin{thm}\label{thm:dir-ops}
    Two digraphs $D_1 = (V,A_1)$ and $D_2 = (V,A_2)$ have the same
    net-degree sequence if and only if  $D_1$ can be obtained from $D_2$ by a
    sequence of $\Delta$- and $\Lambda$-operations.
\end{thm}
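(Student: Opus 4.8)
### Proof Proposal

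\textbf{Overall strategy.} The plan is to prove the two directions separately. The ``if'' direction — that $\Delta$- and $\Lambda$-operations preserve the net-degree sequence — is immediate from the definition \eqref{eqn:netdeg}: a directed triangle on three previously isolated nodes contributes $0$ to each net-degree (each of the three nodes gains one in-edge and one out-edge), and replacing $(u,w)$ by $(u,v),(v,w)$ leaves $\ds_u$ and $\ds_w$ unchanged and gives $v$ one in-edge and one out-edge, so $\ds_v$ stays $0$; since $v$ was isolated, this is consistent. So the real content is the ``only if'' direction: any two realizations of the same net-degree sequence are connected by these moves.

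\textbf{Key steps for the ``only if'' direction.} First I would reduce to showing that every realization $D = ([n], A)$ of a given net-degree sequence $\ds$ can be transformed by $\Delta$- and $\Lambda$-operations into a fixed canonical realization $D_\ds$ depending only on $\ds$; then any two realizations are connected via $D_\ds$ (the operations are clearly reversible). A natural choice of canonical form: using the geometric fact recalled before Theorem~\ref{thm:dErdGal}, write $\ds = \sum_{(i,j)\in A_0} e_i - e_j$ where $A_0$ is built greedily — e.g. after sorting so that $d_{\sigma(1)} \ge \cdots \ge d_{\sigma(n)}$, repeatedly add an arc from the current most-positive node to the current most-negative node, decrementing a running imbalance vector, until it reaches zero; this produces a canonical ``staircase'' digraph. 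The heart of the argument is then a local lemma: if $D$ and $D'$ are two realizations of $\ds$ and $A \ne A'$, then some $\Delta$- or $\Lambda$-operation applied to $D$ strictly decreases a suitable nonnegative integer potential, e.g. $\Phi(D) = |A \,\triangle\, A'|$ or the total number of arcs $|A|$, or brings $D$ ``closer'' to the canonical form in a fixed monotone statistic. Concretely, one shows: given any realization, a $\Lambda$-operation can be used to contract a directed path of length $2$ through a low-degree intermediate node, and a $\Delta$-operation removes a directed $3$-cycle; iterating, every realization reduces to one with no ``removable'' structure, and such irreducible realizations of a fixed $\ds$ must coincide with $D_\ds$. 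The bookkeeping showing the potential genuinely drops — handling the case where the vertex $v$ needed for a $\Lambda$-operation is not available because all candidate nodes are already adjacent, which forces the existence of a directed cycle and hence a $\Delta$-move after possibly some preparatory $\Lambda$-moves — is the technical crux.

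\textbf{Main obstacle.} The hard part will be the ``availability'' issue: a single $\Lambda$- or $\Delta$-operation requires a set of mutually non-adjacent nodes (three for $\Delta$, one fresh node $v$ for $\Lambda$), and in a dense digraph such nodes may not exist. The resolution, as in the undirected Havel–Hakimi proof, is to first apply preparatory operations that create the needed sparse configuration without changing the net-degree sequence, then argue by a global potential (number of arcs, or symmetric difference with the target) that this two-phase process terminates. One must also carefully treat the sign/orientation constraints — unlike undirected $2$-switches, here the directions of the arcs along a contracted path must be compatible — which is exactly why both a triangle move and a path move are needed rather than one. I would organize the proof around a single monovariant and a case analysis (a removable directed $3$-cycle exists; a directed path through a deletable node exists; or neither, in which case $D = D_\ds$), deferring the routine verification that each reduction step is net-degree preserving to the ``if'' direction already established.
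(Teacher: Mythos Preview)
The paper does not actually prove Theorem~\ref{thm:dir-ops}: it is stated with the attribution ``The following result is taken from~\cite{Pirzada}'' and no argument is given. So there is no proof in the paper to compare your proposal against.

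As for your outline on its own merits: the overall architecture (reduce every realization to a canonical one via a monovariant, using reversibility of the moves) is a standard and viable strategy, and you have correctly identified the genuine difficulty, namely that the $\Lambda$- and $\Delta$-moves require specific non-adjacency patterns that a dense digraph need not offer. However, what you have written is a plan rather than a proof: the sentences ``the bookkeeping showing the potential genuinely drops \dots\ is the technical crux'' and ``I would organize the proof around a single monovariant and a case analysis'' announce the work but do not do it. In particular you have not specified a potential that provably decreases, nor shown that when no $\Delta$- or $\Lambda$-move is directly available some preparatory sequence creates one while still making net progress; this is exactly where such arguments can stall or loop. Two small slips to fix if you flesh this out: in the $\Lambda$-operation $v$ is only required to be non-adjacent to $u$ and $w$, not isolated, so ``$\ds_v$ stays $0$'' should read ``$\ds_v$ is unchanged''; and in your greedy canonical form the arc should go from the most-negative node to the most-positive one, since net-degree is in-degree minus out-degree per~\eqref{eqn:netdeg}. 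If you want a cleaner route than the canonical-form reduction, a common alternative is to look at the symmetric difference $A_1\triangle A_2$: reversing the arcs of $A_2\setminus A_1$ gives a multidigraph that is balanced at every node (by $\ds(D_1)=\ds(D_2)$), hence decomposes into arc-disjoint directed circuits, and one then shows each such circuit can be cancelled by $\Delta$- and $\Lambda$-moves.
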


From this result, we obtain a simple characterization of those digraphs
corresponding to uniquely realizable net-degree sequences.

\begin{cor} \label{cor:uniq_rel_digraph}
    The degree sequence of a simple digraph $D$ is uniquely realizable if and
    only if $D$ does not contain one of the following four graphs as induced
    subgraph:
    \begin{center}
        \begin{overpic}[width=2cm]{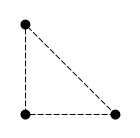}
            \put(27,-5){\rm (D1)}
        \end{overpic}
        \quad
        \begin{overpic}[width=2cm]{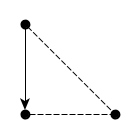}
            \put(27,-5){\rm (D2)}
        \end{overpic}
        \quad
        \begin{overpic}[width=2cm]{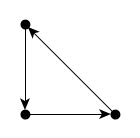}
            \put(27,-5){\rm (D3)}
        \end{overpic}
        \quad
        \begin{overpic}[width=2cm]{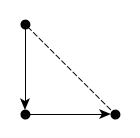}
            \put(27,-5){\rm (D4)}
        \end{overpic}
    \end{center}
\end{cor}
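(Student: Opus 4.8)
The plan is to use Theorem~\ref{thm:dir-ops} to convert unique realizability into a purely local condition on $D$, and then to check that this local condition is exactly the exclusion of the four induced subgraphs (D1)--(D4).

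First I would observe that whenever a $\Delta$- or $\Lambda$-operation is applicable to $D$, it outputs a digraph with a strictly different arc set: a $\Delta$-operation changes the number of arcs by $3$, and a $\Lambda$-operation replaces one arc by two (or vice versa) and thus changes the number of arcs by $1$. Hence, by the ``only if'' direction of Theorem~\ref{thm:dir-ops}, $\ds(D)$ is uniquely realizable if and only if no $\Delta$- and no $\Lambda$-operation can be applied to $D$ in either direction. I would also record the easy point that each of the four operations, when applicable, produces a \emph{simple} digraph; this is precisely why the applicability conditions involve non-adjacency clauses (e.g.\ a forward $\Lambda$ on an arc between $u$ and $w$ using a node $v$ requires $v$ adjacent to neither $u$ nor $w$, so that no multi-arc is created).

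Second, I would unravel ``no operation applies'' into four conditions, using that reversing all arcs of $D$ is a symmetry preserving realizability, so the orientations in the patterns below are irrelevant up to isomorphism:
\begin{enumerate}[\rm (a)]
    \item no forward $\Delta$: $D$ has no three pairwise non-adjacent nodes;
    \item no backward $\Delta$: $D$ contains no directed triangle (three arcs forming a $3$-cycle);
    \item no forward $\Lambda$: whenever $u$ and $w$ are joined by an arc and $v$ is any other node, $v$ is adjacent to $u$ or to $w$;
    \item no backward $\Lambda$: $D$ contains no directed path $u \to v \to w$ with $u$ and $w$ non-adjacent.
\end{enumerate}
Third, since $D$ is simple, the sub-digraph induced on any three nodes is determined by which of the three pairs carry an arc and in which direction, so I would match the conditions with the forbidden patterns triple by triple: condition (a) fails on $\{u,v,w\}$ exactly when they induce the empty digraph~(D1); (b) fails exactly when they induce a directed $3$-cycle~(D2) (here the induced subgraph automatically equals the $3$-cycle, as there is at most one arc per pair); (c) fails exactly when they induce a single arc together with an isolated node~(D3); and (d) fails exactly when they induce a directed path on three vertices~(D4). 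Taking conjunctions over all triples yields that (a)--(d) hold simultaneously if and only if $D$ contains none of (D1)--(D4) as an induced subgraph, which completes the proof.

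The main difficulty is bookkeeping rather than any genuine obstacle: one must translate the set of applicable operations faithfully --- in particular distinguishing a removable directed triangle from a non-removable transitive one, and confirming that the inverse $\Lambda$-operation carries no hidden hypothesis on the middle node beyond non-adjacency of the endpoints --- and one must verify that the pictures (D1)--(D4) are exactly the minimal witnesses in (a)--(d), up to the arc-reversal symmetry. A useful sanity check worth recording is that the transitive triangle avoids all of (D1)--(D4) and is indeed uniquely realizable, whereas the disjoint union of an arc and an isolated node, which is pattern~(D3), is not, since a $\Lambda$-operation transforms it into a directed path.
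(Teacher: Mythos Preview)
Your argument is correct and follows the paper's approach exactly: reduce unique realizability via Theorem~\ref{thm:dir-ops} to the non-applicability of any $\Delta$- or $\Lambda$-operation, then match each of the four cases to one forbidden induced subgraph on three vertices. The only discrepancy is a harmless labeling swap---in the paper (D2) is the single arc with an isolated node and (D3) the directed triangle (as one can infer from how they are used in the proof of Theorem~\ref{thm:uniq_digraph}), whereas you assign them the other way; the mathematics is unaffected.
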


\begin{proof}
    A digraph does not permit a $\Delta$- or $\Lambda$-operation if and only
    if it avoids the given four induced subgraphs. The claim now follows from 
    Theorem~\ref{thm:dir-ops}.
\end{proof}

A digraph $D = (V,A)$ is \Defn{transitive} if whenever a node $v$ can be
reached from $u \in V$ by a directed path, then $(u,v)$ is a directed edge in
$D$. Digraphs $D = (V,A)$ that are both transitive and acyclic correspond exactly to 
partial order relations on $V$ and we simply call $D$ a \Defn{poset}. A
poset $D$ is of \Defn{width $\boldsymbol 2$} if every antichain, that is every
collection of nodes not reachable from one another, is of size at most $2$.

\begin{thm}\label{thm:uniq_digraph}
    A digraph $D = (V,A)$ has a uniquely realizable net-degree sequence if and
    only if $|V| \le 2$ or $D$ is a connected, width-$2$ poset.
\end{thm}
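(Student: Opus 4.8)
The plan is to deduce everything from Corollary~\ref{cor:uniq_rel_digraph}, which says that the net-degree sequence of $D$ is uniquely realizable exactly when $D$ has no induced subdigraph isomorphic to any of (D1)--(D4). If $|V|\le 2$ there are no three-element induced subdigraphs at all, so $\ds(D)$ is uniquely realizable, matching the statement. For $|V|\ge 3$ the goal is to show that $D$ avoids all of (D1)--(D4) \emph{if and only if} $D$ is a connected width-$2$ poset.

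For the ``if'' direction I would assume $D$ is a connected width-$2$ poset and inspect the four configurations in turn. Width~$2$ forbids three pairwise non-adjacent nodes (an antichain of size~$3$), so (D1) does not occur; acyclicity forbids a directed triangle, so (D2) does not occur; transitivity forces the arc $u\to w$ whenever $u\to v\to w$, so there is no induced directed $2$-path with non-adjacent endpoints, ruling out (D4). The remaining case, (D3) -- an arc $u\to w$ together with a node $v$ incomparable to both $u$ and $w$ -- is the heart of the argument. The claim to prove is: in a connected width-$2$ poset, for every comparable pair $u<w$ every other node is comparable to $u$ or to $w$. A first step is that the nodes incomparable to a fixed node always form a chain (otherwise that node together with two incomparable such nodes would be a $3$-antichain); one then has to use connectedness to control how this chain can sit relative to the pair $u<w$, e.g.\ via an explicit description of connected width-$2$ posets as suitably interleaved pairs of chains (Dilworth).

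For the ``only if'' direction I would assume $D$ avoids (D1)--(D4). Whenever $u\to v$ and $v\to w$ are arcs, the induced subdigraph on $\{u,v,w\}$ is neither (D4) -- so $u$ and $w$ are adjacent -- nor (D2) -- so the arc between them is $u\to w$ rather than $w\to u$; hence, by induction along paths, $D$ is transitive. A shortest directed cycle, if one existed, would be shortcut by transitivity to a shorter one unless it were a directed triangle, which is excluded by the absence of (D2); so $D$ is acyclic and therefore a poset. The absence of (D1) says there is no antichain of size~$3$, so this poset has width at most~$2$. Finally $D$ is connected: if it had a component $C$ with $|C|\ge 2$, an arc $u\to w$ inside $C$ together with any node $v\notin C$ would form an induced (D3), while if every component were a singleton then $D$ would be an antichain on $\ge 3$ nodes and hence contain (D1). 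Thus $D$ is a connected width-$2$ poset, which closes the equivalence.

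The step I expect to be the main obstacle is the treatment of (D3) in the ``if'' direction, i.e.\ establishing that no comparable pair in a connected width-$2$ poset admits a third node incomparable to both of its endpoints; this is where a genuine structural grip on connected width-$2$ posets is required. Everything else -- recognising (D1), (D2), (D4) as failures of, respectively, width~$2$, acyclicity and transitivity, together with the converse bookkeeping -- is routine, relying only on Corollary~\ref{cor:uniq_rel_digraph}, Dilworth's theorem, and shortcutting of directed paths under transitivity.
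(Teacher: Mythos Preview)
Your plan mirrors the paper's proof: both directions are read off Corollary~\ref{cor:uniq_rel_digraph}, and your ``only if'' argument is essentially the paper's, spelled out in more detail. One cosmetic point: you have the labels (D2) and (D3) swapped relative to the paper --- there (D2) is the arc with an isolated third vertex and (D3) the directed triangle, so in the paper it is (D3) together with (D4) that yields transitivity and acyclicity, and (D2) that rules out several components.

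Where your proposal diverges from the paper is that you flag the exclusion of the arc-plus-isolated-vertex configuration in the ``if'' direction as the main obstacle, whereas the paper dismisses the whole ``if'' direction as ``easy to see''. Your instinct is correct, and in fact this is a genuine gap: the implication \emph{connected width-$2$ poset $\Rightarrow$ no induced arc-plus-isolated-vertex} is false. On four nodes take the poset with relations $a<b$, $a<c$, $a<d$, $b<d$; it is connected and has width~$2$, yet $c$ is incomparable to both $b$ and $d$, so $\{b,c,d\}$ induces the arc $b\to d$ with $c$ isolated. The $\Lambda$-operation replacing $b\to d$ by $b\to c\to d$ then gives a second realization of the net-degree sequence $(-3,0,1,2)$. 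Thus no Dilworth-based structural argument of the sort you outline can close this case, because the claim itself fails. The hypothesis that makes the theorem true --- and that is tacitly used in Theorem~\ref{thm:width2-counting} and Proposition~\ref{prop:width2-face} --- is that the incomparability relation of the poset be a partial matching, i.e.\ that $D$ come from an ordered partition of $[n]$ into blocks of size at most~$2$, with $u\to v$ whenever the block of $u$ precedes that of $v$. Under that stronger hypothesis the check is immediate: every node has at most one incomparable partner, so no node can be incomparable to both endpoints of an arc.
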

\begin{proof}
    If $D = (V,E)$ contains at most $2$ nodes it is clearly uniquely
    realizable and for a connected, width-$2$ poset it is easy to see that it
    satisfies the conditions of Corollary~\ref{cor:uniq_rel_digraph}.

    Conversely, let $D = (V,A)$ be a uniquely realizable digraph.  By using
    (D4) from Corollary~\ref{cor:uniq_rel_digraph} and induction on the length
    of the path, it follows that $D$ is transitive. Combining (D3) and (D4)
    shows that $D$ cannot contain cycles and hence is a poset. If $|V| > 2$,
    then (D2) excludes distinct components. Finally, (D1) assures us every
    antichain of $D$ is of size at most $2$. Hence $D$ is a connected,
    width-$2$ poset.
\end{proof}

Using this bijection, one can enumerate uniquely realizable degree sequences:

\begin{thm} \label{thm:width2-counting}
    Let $U(n)$ be the number of uniquely realizable net-degree sequences of
    simple digraphs  on $n \ge 0$ labelled vertices. Then
    \[
        U(n) \ = \ 
        \frac{n!}{\sqrt{3}}\Bigl( 
        (\sqrt{3}-1)^{-n-1}  - 
        (-\sqrt{3}-1)^{-n-1}\Bigr),
        \]
    with exponential generating function
    \[
        \sum_{n \geq 0} \frac{ U(n)x^n}{n!} \ = \ \frac{1}{1-x - \frac{x^2}{2}
        }. 
    \]
\end{thm}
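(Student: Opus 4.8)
The plan is to reduce $U(n)$ to an easy combinatorial count via the forbidden-subgraph description, read off the exponential generating function, and obtain the closed form by partial fractions. Since a uniquely realizable sequence is recovered from its one realization, $U(n)$ is the number of digraphs on $[n]$ whose net-degree sequence is uniquely realizable, i.e.\ (by Corollary~\ref{cor:uniq_rel_digraph}; trivially for $n\le 2$, as (D1)--(D4) each have three nodes) the digraphs on $[n]$ avoiding (D1)--(D4) as induced subgraphs. The crucial claim I would establish is that these are exactly the posets admitting a (necessarily unique) ordinal sum decomposition $P=A_1\oplus A_2\oplus\cdots\oplus A_k$ in which each block $A_i$ is a single vertex or a $2$-element antichain. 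One direction is easy: in such a $P$ two vertices are incomparable precisely when they constitute one of the $2$-element blocks, so $P$ is transitive and acyclic, has no $3$-antichain, has no directed $2$-path with non-adjacent ends, and has no comparable pair together with a third vertex incomparable to both of its endpoints (such a vertex would have to lie in two distinct blocks); hence $P$ avoids (D1)--(D4).

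For the converse I would induct on $|V|$. If $P$ avoids (D1)--(D4) then $P$ is a poset of width at most $2$, so its set $A_1$ of minimal elements is an antichain of size one or two. If some $m\in A_1$ did not lie below some $v\notin A_1$, then $m$ and $v$ are incomparable; but $v$ lies above some minimal element $m'\ne m$, and then the comparable pair $m'<v$ together with $m$ forms a forbidden induced configuration — a contradiction. So every element of $A_1$ lies below every other vertex, giving $P=A_1\oplus(P\setminus A_1)$; the subposet $P\setminus A_1$ again avoids (D1)--(D4), and we are done by induction (the remainder is empty, a single vertex, a $2$-antichain, or a smaller poset of the same kind).

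Consequently $U(n)$ is the number of linearly ordered partitions of $[n]$ into blocks of sizes $1$ and $2$. A block of size one contributes $x$ and an (unordered) block of size two contributes $\tfrac{x^2}{2}$ to the exponential generating function, and since the blocks are linearly arranged,
\[
  \sum_{n\ge 0}\frac{U(n)}{n!}\,x^n \;=\; \sum_{k\ge 0}\Bigl(x+\tfrac{x^2}{2}\Bigr)^{k} \;=\; \frac{1}{1-x-\tfrac{x^2}{2}},
\]
which is the stated generating function (it also gives the recursion $U(n)=n\,U(n-1)+\binom{n}{2}U(n-2)$ with $U(0)=U(1)=1$).

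For the closed form, write $1-x-\tfrac{x^2}{2}=\tfrac12\bigl(3-(1+x)^2\bigr)=\tfrac12(\sqrt3-1-x)(\sqrt3+1+x)$ and decompose
\[
  \frac{1}{1-x-\tfrac{x^2}{2}} \;=\; \frac{1}{\sqrt3}\left(\frac{1}{\sqrt3-1-x}+\frac{1}{\sqrt3+1+x}\right);
\]
expanding each term as a geometric series $\tfrac{1}{r-x}=\sum_{n\ge 0}r^{-n-1}x^n$ and extracting $n!\,[x^n]$ yields $U(n)=\tfrac{n!}{\sqrt3}\bigl((\sqrt3-1)^{-n-1}-(-\sqrt3-1)^{-n-1}\bigr)$. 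Everything past the first two paragraphs is formal; the only real work — and the point I would watch most carefully — is the two-minimal-elements case of the structural claim, which is exactly what rules out posets such as the four-element one with two minimal and two maximal elements and one missing comparability, together with checking that the degenerate remainders make the base cases of the induction reproduce $U(0),U(1),U(2)$.
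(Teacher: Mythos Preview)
Your argument is correct and is essentially the paper's proof in a different dress: both peel off an extremal antichain (you take minimal elements, the paper takes maximal ones) to exhibit the poset as an ordinal sum of $1$- and $2$-element antichains, which is exactly the recurrence $U(n)=n\,U(n-1)+\binom{n}{2}U(n-2)$ the paper writes down. The one cosmetic difference is that you package the outcome as a bijection with linearly ordered set partitions of $[n]$ into blocks of size at most $2$ and read off the exponential generating function directly via the sequence construction, whereas the paper first states the recurrence and then appeals to the standard theory of linear recurrences; your explicit partial-fraction computation is what that appeal unwinds to.
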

\begin{proof}
    From Theorem~\ref{thm:uniq_digraph}, we infer that $U(0) = U(1) = 1$ and
    $U(2) = 3$. For $n \ge 3$, $D = (V,A)$ is a connected, width-$2$ poset.
    Let $M \subseteq V$ be the set of maximal elements. It follows from the
    definition of width-$2$ posets that $M$ contains either one or two
    elements. Removing $M$ from $D$ yields a uniquely realizable digraph on
    the node set $V\setminus M$. Hence, we infer that
    \[
        U(n) \ = \ n \cdot U(n-1) + {n \choose 2} \cdot  U(n-2) 
    \]
    for all $n \ge 2$. Now, define $\bar{U}(n) := \frac{1}{n!}U(n)$. Then
    $\bar{U}(n)$ satisfies the linear recurrence relation
    $\bar{U}(n) = U(n-1) + \frac{1}{2} U(n-2)$. The generating function as
    well as the formula can now be inferred using~\cite[Theorem~4.1.1]{EC1}.
\end{proof}

The first values of $U(n)$ are $1,1,3,12,66,450,3690$. Consistent with our
interpretation, $U(n)$ occurs in connection with ordered partitions of $[n]$
with parts of size at most $2$ and Boolean intervals in the weak Bruhat order
of $S_n$; see~\cite{Sloane} and Proposition~\ref{prop:width2-face}.

In terms of geometry, it is clear that every vertex of $\Pdir_n$ corresponds to a
uniquely realizable degree sequence. Indeed, every tournament is uniquely
determined by its net-degree sequence. In the language of posets, tournaments
correspond to total (or linear) orders and consequently have no incomparable
elements. Taking into account the number of pairs of incomparable elements, we
can locate the uniquely realizable net-degree sequences in $\Pdir_n$.

\begin{prop} \label{prop:width2-face}
    Let $\ds \in \Pdir_n \cap \Z^n$ be a net-degree sequence and let $F
    \subseteq \Pdir_n$ be the inclusion-minimal face that contains $\ds$ in its
    relative interior. Then the following statements are equivalent:
    \begin{enumerate}[\rm (i)]
        \item $\ds$ is uniquely realizable. Its corresponding poset has $k$
            pairs of incomparable elements.
        \item $F$ is a proper face lattice-isomorphic to the $k$-dimensional
            cube $[-1,1]^k$ for some $k \geq 0$. 
	\end{enumerate}
\end{prop}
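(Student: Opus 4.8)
The plan is to prove the two implications by connecting the combinatorial structure of width-$2$ posets with the face structure of the permutahedron $\Pdir_n$. First I would recall the basic geometry: the faces of $\Pdir_n = 2\Pi_{n-1} - (n+1)\mathbf{1}$ correspond to \emph{ordered set partitions} of $[n]$, i.e., to preorders $B_1 \prec B_2 \prec \cdots \prec B_m$ with the $B_\ell$ disjoint blocks covering $[n]$; the dimension of the face is $n - m$, and a net-degree sequence $\ds$ lies in the relative interior of the face indexed by the ordered partition $(B_1,\dots,B_m)$ exactly when $\ds$ is the barycentre-type point whose realizing digraphs are those digraphs $D$ with $B_s \times B_t \subseteq A$ for all $s < t$ and no arcs between elements of the same block (each block being realized as an arbitrary digraph on $|B_\ell|$ vertices with the appropriate internal net-degrees). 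This is the digraph analogue of the computation in the proof of Proposition~\ref{prop:Pdir_tight_weak-split}, iterated over all the defining inequalities whose equalities cut out $F$.

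Next I would establish (i)$\Rightarrow$(ii). Suppose $\ds$ is uniquely realizable, so by Theorem~\ref{thm:uniq_digraph} its unique realization $D$ is (for $|V|>2$) a connected width-$2$ poset $P$; the cases $|V|\le 2$ are immediate (the relevant faces are a point, a point, and the segment $[e_1-e_2, e_2 - e_1]$, matching $[-1,1]^0$, $[-1,1]^0$, $[-1,1]^1$). A connected width-$2$ poset has a canonical layered structure: process $P$ by repeatedly stripping off its set of maximal elements, which by width $2$ is either a singleton or a $2$-antichain; this produces an ordered partition $(B_1,\dots,B_m)$ of $V$ where each $B_\ell$ has size $1$ or $2$. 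I claim this is precisely the minimal face $F$ containing $\ds$. On one hand, $D$ satisfies $B_s \times B_t \subseteq A$ for $s<t$ (transitivity plus the stripping order), so $\ds$ lies in the face indexed by $(B_1,\dots,B_m)$. On the other hand, this face is $F$ itself — it cannot be smaller, since any further refinement would force an arc inside some two-element $B_\ell$, contradicting that $B_\ell$ is an antichain of $P$; and it cannot be contained in a larger face because $D$ is the \emph{unique} realization (if $\ds$ lay in a lower-dimensional face, that face would be indexed by a finer ordered partition with some block split, but then $\ds$ would have realizations with arcs going both ways inside the now-merged positions — more carefully, one argues that the realizations of $\ds$ are exactly the digraphs compatible with some ordered partition, and uniqueness pins down the finest one). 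The number of two-element blocks is $k$, and since a two-element antichain $B_\ell = \{a,b\}$ contributes, among the pairs incomparable in $P$, exactly the pair $\{a,b\}$ (all other pairs straddle different blocks and are hence comparable), $k$ equals the number of incomparable pairs. Finally, a face indexed by an ordered partition with block sizes in $\{1,2\}$, with exactly $k$ blocks of size $2$, is a product of $k$ segments and $(m-k)$ points, hence face-lattice-isomorphic to $[-1,1]^k$; it is a proper face since $m \ge 2$ when $|V| \ge 3$.

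For (ii)$\Rightarrow$(i), suppose $F$ is face-lattice-isomorphic to $[-1,1]^k$. A face of $\Pdir_n$ indexed by $(B_1,\dots,B_m)$ is combinatorially a product $\prod_\ell \Pdir_{B_\ell}$, so its face lattice is the product of the face lattices of the $\Pdir_{|B_\ell|}$; this is a Boolean-type cube lattice iff each factor $\Pdir_{|B_\ell|}$ is a point or a segment, i.e.\ iff every $|B_\ell| \le 2$. Thus $F$ being cube-isomorphic forces all blocks to have size $\le 2$, with exactly $k$ blocks of size $2$. Now $\ds$ lies in $\relint F$, so its realizations are the digraphs $D$ with $B_s \times B_t \subseteq A$ for $s<t$ and, inside each $B_\ell$, a digraph with the prescribed internal net-degrees. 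A block of size $1$ forces nothing; a block $\{a,b\}$ of size $2$ forces internal net-degree $0$, which on two vertices means \emph{no} internal arc (an arc would give net-degrees $\pm 1$), so the realization inside each block is forced and unique. Hence $D$ is uniquely determined: take all the cross-block arcs and no intra-block arcs. This $D$ is transitive (cross-block arcs compose correctly because $\prec$ is a total order on blocks), acyclic (the block order is linear), connected (consecutive blocks are fully joined), and width $2$ (an antichain can meet each block in at most one element, but consecutive blocks are comparable, so an antichain lies inside a single block — size $\le 2$). So $D$ is a connected width-$2$ poset (or has $\le 2$ vertices), and by Theorem~\ref{thm:uniq_digraph} $\ds$ is uniquely realizable; its number of incomparable pairs is the number of size-$2$ blocks, namely $k$.

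I expect the main obstacle to be the careful bookkeeping in the claim that \emph{the} face whose relative interior contains $\ds$ is exactly the one read off from the maximal-element stripping — i.e., simultaneously showing minimality (no finer ordered partition also contains $\ds$ in its relative interior) and that the realizations of a relative-interior point of a permutahedron face are precisely the digraphs compatible with the corresponding ordered partition. This requires pinning down the digraph-theoretic meaning of each facet-defining inequality $\sum_{i\in I} d_i \le |I|(n-|I|)$ of Theorem~\ref{thm:dErdGal}(iii) and checking that the equalities satisfied by $\ds$ carve out exactly the claimed face; the rest is then the routine translation between ordered partitions, products of permutahedra, and cube face lattices.
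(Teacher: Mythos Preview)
Your proposal is correct and takes essentially the same approach as the paper: both identify faces of $\Pdir_n \cong 2\Pi_{n-1}$ with ordered set partitions, observe that each face is a product $\prod_\ell \Pi_{|B_\ell|-1}$ and hence a $k$-cube precisely when all blocks have size at most $2$ (with exactly $k$ of size $2$), and match such ordered partitions with width-$2$ posets via the rank (maximal-element-stripping) decomposition. Your version is considerably more detailed than the paper's three-sentence sketch and, in particular, correctly isolates as the delicate step the identification of the minimal face containing $\ds$ with the layered partition --- a point the paper leaves implicit.
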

\begin{proof}
    The polytope $\Pdir_n$ is lattice-isomorphic to the permutahedron $2
    \Pi_{n-1}$. The $k$-dimensional faces of $\Pi_{n-1}$ are in bijection to
    ordered partitions of $[n]$ into $n-k$ non-empty parts; \cite[Ch.~0]{Zie}.
    For an ordered partition $B = (B_1,\dots,B_{n-k})$ with sizes $a_i =
    |B_i|$, the corresponding face is lattice-isomorphic to $\Pi_{a_1-1}
    \times \cdots \times \Pi_{a_{n-k}-1}$. Hence, with
    Theorem~\ref{thm:uniq_digraph}, it suffices to prove that width-$2$ posets
    with $k$ pairs of incomparable elements are in bijection with ordered
    partitions of $[n]$ into $n-k$ parts. The corresponding poset $P$ is graded
    and the partition corresponds exactly to the rank partition of $P$.
\end{proof}

\section{Bidirected graphs}
\label{sec:bidirected}

Let $B = (G,\tau)$ be a bidirected graph as defined in the introduction. As
before, we will assume throughout that $G = (V,E)$ is an undirected graph
without loops or parallel edges on the node set $V = \{1,\dots,n\}$. The
bidirection function $\tau : V \times E \rightarrow \{-1,0,+1\}$ gives every
node-edge-incidence an orientation. Let us write $\ds(B)^+_i := |\{ e \in E
: \tau(i,e) = +1 \}|$ for the number of edges locally oriented towards $i \in [n]$ and
define $\ds(B)^-_i$ likewise. The \Defn{net-degree sequence} $\ds(B)$ of $B$
then satisfies $\ds(B)_i = \ds(B)_i^+ - \ds(B)^-_i$ for all $i=1,\dots,n$. 

Defining orientations of edges locally renders net-degree sequences of
bigraphs far less restricted as in the undirected or directed setting. A
characterization is nevertheless not vacuous.  Indeed, a key observation is
that the underlying undirected graph $G = |B|$ has ordinary degree sequence
$\ds(G) = \ds(B)^+ + \ds(B)^-$ and hence
\begin{equation}\label{eqn:Bkey}
    \ds(G)_i \ \equiv \ \ds(B)_i \ \ \mathrm{mod}\ 2  \quad \text{ and } \quad
    \ds(G)_i \ \geq \  |\ds(B)_i|, 
\end{equation}
for all $i=1,\dots,n$.  Borrowing from Section~\ref{sec:undirected}, we write
$\ext{\ds} = (\ds,\frac{1}{2}(d_1+\cdots+d_n)) \in \R^{n+1}$. Towards a
geometric perspective on degree sequences of bigraphs, we define the polytope
\[
    \Pbid_n \ := \ \conv \Bigg\{ \sum_{i < j} \tau_{ij} \ext{e}_i + \tau_{ji}
    \ext{e}_j  : \tau \in \{-1,+1\}^{n \times n} \Bigg\}.
\]

\begin{thm}\label{thm:bErdGal}
    For a vector $\ds = (d_1,\dots,d_n) \in \Z^n$ the following are equivalent:
    \begin{enumerate}[\rm (i)]
        \item There is a bigraph $B$ with $\ds = \ds(B)$;
        \item $\ext{\ds} \in \Pbid_n$; \item $d_1 + \cdots + d_n$ is even and
            $-(n-1) \leq d_i \leq n-1$ for $i=1, \ldots, n$.
    \end{enumerate}
\end{thm}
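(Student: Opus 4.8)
The plan is to make the polytope $\Pbid_n$ completely explicit and then establish the chain (i) $\Rightarrow$ (iii), (iii) $\Rightarrow$ (i), together with (ii) $\Leftrightarrow$ (iii). For the geometric equivalence, observe that the defining points of $\Pbid_n$ are exactly the vectors $\sum_{i<j}c_{ij}$ where each $c_{ij}$ ranges independently over the four points $\pm\ext{e}_i\pm\ext{e}_j$; hence, using $\conv(A+B)=\conv A+\conv B$ for finite point sets and telescoping the resulting Minkowski sum, $\Pbid_n = \sum_{1\le i<j\le n}\conv\{\pm\ext{e}_i\pm\ext{e}_j\} = \sum_{1\le i<j\le n}\bigl([-\ext{e}_i,\ext{e}_i]+[-\ext{e}_j,\ext{e}_j]\bigr) = (n-1)\sum_{i=1}^n[-\ext{e}_i,\ext{e}_i]$, a zonotope which, because each $\ext{e}_i$ has last coordinate $\tfrac12$, is the ``slanted cube'' $\{(\mathbf v,\tfrac12\sum_i v_i):\mathbf v\in[-(n-1),n-1]^n\}$. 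In particular $\ext{\ds}=(\ds,\tfrac12\sum_i d_i)$ lies in $\Pbid_n$ exactly when $-(n-1)\le d_i\le n-1$ for all $i$, and it is moreover an integer point exactly when in addition $\sum_i d_i$ is even; so the integer points of $\Pbid_n$ correspond bijectively to the vectors satisfying (iii). For (i) $\Rightarrow$ (iii) I would invoke~\eqref{eqn:Bkey}: if $B$ realizes $\ds$ then $|d_i|\le\ds(|B|)_i\le n-1$ and $d_i\equiv\ds(|B|)_i\pmod 2$, whence $\sum_i d_i\equiv\sum_i\ds(|B|)_i\pmod 2$, and the latter sum is even since it is twice the number of edges of $|B|$. (The implication (i) $\Rightarrow$ (ii) is also immediate geometrically: $\ext{\ds}(B)=\sum_{ij\in E(|B|)}\tau(i,ij)\ext{e}_i+\tau(j,ij)\ext{e}_j$ is the midpoint of the two vertices of $\Pbid_n$ obtained by extending $\tau$ to all of $K_n$, setting every missing incidence to $+1$ in one case and to $-1$ in the other, so that the extra summands $\pm(\ext{e}_i+\ext{e}_j)$ cancel.)

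The substantive implication is (iii) $\Rightarrow$ (i), where I would first pick the underlying graph and then orient it. Let $T:=\{i\in[n]:d_i\not\equiv n-1\ (\mathrm{mod}\ 2)\}$. Since $\sum_i\bigl(d_i-(n-1)\bigr)=\sum_i d_i-n(n-1)$ is even --- the first term by (iii), the second because $n(n-1)$ always is --- the set $T$ has even cardinality and therefore carries a perfect matching $M$ (with $M=\emptyset$ when $T=\emptyset$). Put $G:=K_n\setminus M$, so that $\deg_G(i)=n-1$ for $i\notin T$ and $\deg_G(i)=n-2$ for $i\in T$; in either case $\deg_G(i)\equiv d_i\pmod 2$ and $\deg_G(i)\ge|d_i|$, where for $i\in T$ one uses that $|d_i|\le n-1$ together with $|d_i|\equiv d_i\not\equiv n-1$ forces $|d_i|\le n-2$. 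Now choose a bidirection function by declaring, independently at each node $i$, exactly $\tfrac12(\deg_G(i)+d_i)$ of the node--edge incidences at $i$ to have local orientation $+1$ and the remaining $\tfrac12(\deg_G(i)-d_i)$ to have local orientation $-1$; both counts are non-negative integers by the congruence and inequality above. In a bidirected graph the local orientation of an edge at one end is unconstrained by the one at the other end, so these independent per-node choices assemble into a legitimate bidirection function $\tau$, and by construction the bigraph $B=(G,\tau)$ satisfies $\ds(B)_i=\tfrac12(\deg_G(i)+d_i)-\tfrac12(\deg_G(i)-d_i)=d_i$. (The cases $n\le1$ are trivial; for $n=1$ condition (iii) forces $d_1=0$.)

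I expect (iii) $\Rightarrow$ (i) to be the only genuine obstacle: one has to produce a \emph{graphical} sequence on $[n]$ that dominates $(|d_1|,\dots,|d_n|)$ coordinatewise with matching parities, and the choice ``$K_n$ minus a matching on the wrong-parity vertices'' works precisely because that vertex set has even size --- which is exactly where the evenness of $\sum_i d_i$ gets used. After that, the orientation step is essentially free, since the two local orientations of an edge are independent; this independence is the conceptual reason why the characterization (iii) for bigraphs is so much simpler than the Erd\H{o}s--Gallai conditions for undirected graphs or the permutahedral inequalities of Theorem~\ref{thm:dErdGal} for digraphs.
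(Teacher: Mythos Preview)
Your proof is correct and follows essentially the same route as the paper: the crucial implication (iii) $\Rightarrow$ (i) is established via the identical construction $G=K_n\setminus M$ for a perfect matching $M$ on the wrong-parity set $T$, together with the same parity count showing $|T|$ is even, followed by the same per-node orientation argument. Your treatment is somewhat more explicit than the paper's in two places---you compute $\Pbid_n=(n-1)\sum_i[-\ext{e}_i,\ext{e}_i]$ as a ``slanted cube'' (which makes transparent the step the paper calls ``obvious'') and you give a clean midpoint argument for (i) $\Rightarrow$ (ii)---but these are elaborations rather than a different strategy.
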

\begin{proof}
    Let $\ds = \ds(B)$ be the net-degree sequence of a bigraph $B$.  The
    net-degree sequence is given by
    \[
        \ds(B) \ = \ \sum_{i \in V, e \in E} \tau(i,e) e_i
    \]
    and~\eqref{eqn:Bkey} together with~\eqref{eqn:d_even} shows that
    $\ext{\ds} \in \Pbid_n$.  This proves (i) $\Longrightarrow$ (ii). The
    implication (ii) $\Longrightarrow$ (iii) is obvious.

    For (iii) $\Longrightarrow$ (i), we first observe that $T = \{ i \in [n] :
    d_i \not\equiv n-1 \ \mathrm{mod}\ 2\}$ has an even number of elements
    which follows from the fact that $d_1 + \cdots + d_n \equiv n(n-1) - |T|$
    modulo $2$.  Let $G = ([n],E)$ be the graph obtained from the complete
    graph $K_n$ by removing a matching of the elements of $T$. Hence, $G$ is a
    simple undirected graph with ordinary degree sequence $\ds' := \ds(G)$.
    The degrees satisfy $\ds(G)_i = n-2$ if $i \in T$ and $\ds(G)_i= n-1$, otherwise.
    We can now turn $G$ into a bigraph $B$ by bidirecting its edges. For every
    node $i \in [n]$, we let $|\ds_i|$ edges point to $i$ or away from $i$
    depending on the sign of $\ds_i$. By construction, the number of
    unoriented edges incident to $i$ is even and we can orient them so that
    their contribution to $\ds(B)_i$  cancels. See Figure~\ref{FigExBOfD} for
    an illustration.
\end{proof}

\begin{figure}[ht]
\centering
\begin{overpic}[width=.1\textwidth]{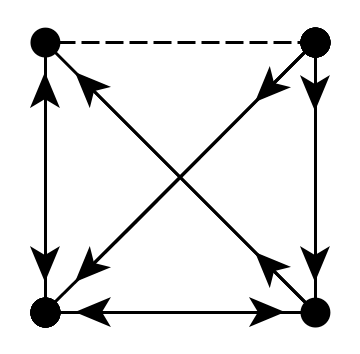} 
\put(-18,84){$v_1$}
\put(-18,5){$v_3$}
\put(100,84){$v_2$}
\put(100,5){$v_4$}
\end{overpic}
\caption{Realizing $\ds=(2,-2,3,1)$ by a bigraph.}
\label{FigExBOfD}
\end{figure}

Theorem~\ref{thm:bErdGal} gives us a means to count degree
sequences.

\begin{cor}\label{cor:bdir_count}
    The number of net-degree sequences of bigraphs on $n$ labelled
    vertices is  
    $
        \frac{(2n-1)^n+1}{2}.
    $
\end{cor}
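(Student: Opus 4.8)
\emph{Approach.} The plan is to use Theorem~\ref{thm:bErdGal} to turn the enumeration into a pure lattice-point count: the net-degree sequences of bigraphs on $n$ labelled vertices are exactly the vectors $\ds = (d_1,\dots,d_n) \in \Z^n$ with $-(n-1) \le d_i \le n-1$ for every $i$ and with $d_1 + \cdots + d_n$ even. So I would fix the set $C := \{-(n-1), -(n-2), \dots, n-2, n-1\}$ of $2n-1$ consecutive integers and count the vectors $\ds \in C^n$ whose coordinate sum is even; call this number $N_{\mathrm{even}}$, and let $N_{\mathrm{odd}}$ be the number with odd sum.

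\emph{Key steps.} First, the total count is immediate: $N_{\mathrm{even}} + N_{\mathrm{odd}} = |C^n| = (2n-1)^n$. Second, to separate the parities I would use the standard sign-generating-function device: $N_{\mathrm{even}} - N_{\mathrm{odd}} = \prod_{i=1}^n \bigl(\sum_{d \in C} (-1)^{d}\bigr) = \bigl(\sum_{d=-(n-1)}^{n-1} (-1)^{d}\bigr)^n$. The inner alternating sum is over an odd number of consecutive integers; pairing $(-1)^{d}$ with $(-1)^{-d}$ and cancelling shows it equals $(-1)^{n-1}$, so the product is $(-1)^{n(n-1)} = 1$ since $n(n-1)$ is always even. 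Third, solving the two-equation linear system $N_{\mathrm{even}} + N_{\mathrm{odd}} = (2n-1)^n$, $N_{\mathrm{even}} - N_{\mathrm{odd}} = 1$ yields $N_{\mathrm{even}} = \tfrac{(2n-1)^n + 1}{2}$, which is the asserted formula.

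\emph{Main obstacle.} There is essentially no obstacle; the only point that deserves a line of care is the evaluation of $\sum_{d=-(n-1)}^{n-1}(-1)^{d}$ and the observation that $n(n-1)$ is even, so the sign $(-1)^{n(n-1)}$ disappears independently of the parity of $n$ — consistent with the fact that the closed form $\tfrac{(2n-1)^n+1}{2}$ does not involve $n \bmod 2$. Alternatively one could argue by an explicit sign-reversing involution (flip one fixed coordinate that ranges over an odd-length symmetric interval, hence attains as many odd as even values plus one extra fixed behaviour), but the generating-function computation above is the most economical.
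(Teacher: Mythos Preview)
Your argument is correct. Both you and the paper start by invoking Theorem~\ref{thm:bErdGal} to reduce the question to counting vectors in $\{-(n-1),\dots,n-1\}^n$ with even coordinate sum, but the two finishes differ.

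The paper argues via the involution $u\mapsto -u$, observing that it fixes only the origin and sends even-sum points to even-sum points; from this it jumps to the count $\tfrac{(2n-1)^n-1}{2}+1$. As written that step is at best elliptical: the map $u\mapsto -u$ \emph{preserves} the parity of the coordinate sum, so all it establishes is that the nonzero even-sum points pair off, i.e.\ that their number is even. It does not by itself separate the even-sum count from the odd-sum count, since odd-sum points pair off under the same involution as well.

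Your generating-function route is a genuinely different approach that actually closes this gap: computing
$N_{\mathrm{even}}-N_{\mathrm{odd}}=\bigl(\sum_{d=-(n-1)}^{n-1}(-1)^d\bigr)^{n}=(-1)^{n(n-1)}=1$
and combining it with $N_{\mathrm{even}}+N_{\mathrm{odd}}=(2n-1)^n$ pins the answer down exactly. One cosmetic remark: ``pairing $(-1)^{d}$ with $(-1)^{-d}$ and cancelling'' is not the right description, since $(-1)^{d}=(-1)^{-d}$; cleaner is to note that among $2n-1$ consecutive integers exactly $n$ have the parity of $n-1$ and $n-1$ have the opposite parity, so the inner sum equals $(-1)^{n-1}$.
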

\begin{proof}
    The number of lattice points in the cube $[-(n-1),(n-1)]^n$ is
    $(2(n-1)+1)^n$ and according to Theorem~\ref{thm:dErdGal}(iii) those with
    even coordinate sum are in bijection with degree sequences of bigraphs.
    Now if $u \in [-(n-1),(n-1)]^n$ has even coordinate sum, then so has $-u$
    and $u \neq -u$ unless $u = 0$. 
    Hence, there are $\frac{(2(n-1)+1)^n-1}{2} + 1
    $ lattice points with even coordinate sum.
\end{proof}

Bigraphs simultaneously generalize undirected graphs and digraphs. Hence, it
is natural to assume that there are more degree-preserving operations for
bigraphs.  The most basic operation that is not present in the directed or
undirected setting is the \Defn{$\boldsymbol \Gamma$-operation}: A
\mbox{$\Gamma$-operation} swaps the local orientations of two edges $uv$ and $vw$
incident to a node $v$ with $\tau(v,uv)\neq\tau(v,vw)$;
see~Figure~\ref{FigureBidirectionChange}. In particular, a $\Gamma$-operation
leaves the underlying graph $|B|$ untouched and only changes the bidirection
function $\tau$.

\begin{figure}[ht]
\centering
\begin{overpic}[width=.1\textwidth]{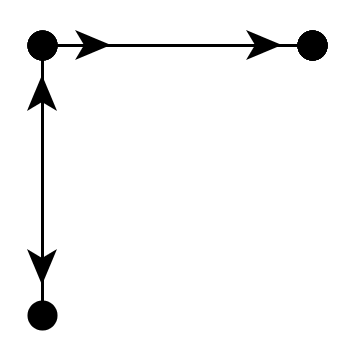}
\put(-10,85){v}
\put(-12,5){w}
\put(100,85){u}
\end{overpic}
\quad
\raisebox{0.75cm}
{
	\scalebox{1.5}{$\longleftrightarrow$}
}
\quad
\begin{overpic}[width=.1\textwidth]{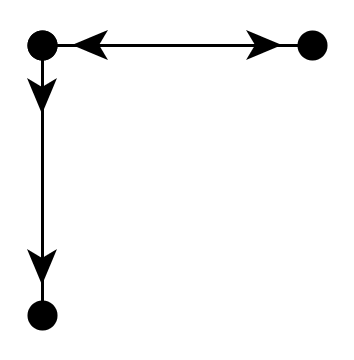}
\put(-10,85){v}
\put(-12,5){w}
\put(100,85){u}
\end{overpic}
\caption{The $\Gamma$-operation.}
\label{FigureBidirectionChange}
\end{figure}

For two bigraphs with the same degree sequence and the same underlying graph,
the \mbox{$\Gamma$-operation} is sufficient to transform one into the other.

\begin{lem} \label{lem:bid_Gamma}
    Let $B$ and $B'$ be two bigraphs with $|B| = |B'|$ and $\ds(B) = \ds(B')$.
    Then there is a finite sequence of $\Gamma$-operations that transforms $B$
    into $B'$. 
\end{lem}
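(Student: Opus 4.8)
The plan is to reduce the problem to a statement about orienting the edges of a fixed graph $G = |B| = |B'|$ so as to realize a prescribed net-degree sequence, and then show that any two such orientations are connected by $\Gamma$-operations. Recall that a bidirection of $G$ is the same data as a function $\sigma\colon V \times E \to \{-1,0,+1\}$ supported on incidences; for an edge $e = uv$ the $\Gamma$-operation at $v$ requires $\tau(v,uv)\ne\tau(v,vw)$ and flips the signs $\tau(v,uv)$ and $\tau(v,vw)$ — equivalently, it reverses the local orientation at $v$ of exactly two edges whose current $v$-signs differ. First I would note that a $\Gamma$-operation changes neither $|B|$ nor $\ds(B)$, so it indeed preserves the two invariants in the hypothesis; the content is the converse reachability statement.

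The key idea is to think of a bidirection of $G$ in terms of the ``half-edge signs''. Fix a reference bidirection, say the one where every incidence is $+1$; then an arbitrary bidirection $\tau$ is encoded by the set $X_\tau \subseteq V \times E$ of incidences where $\tau$ is $-1$. The net-degree at $v$ is $\ds(B)_v = \deg_G(v) - 2\,|\{e : (v,e)\in X_\tau\}|$, so $\ds(B)_v = \ds(B')_v$ is equivalent to $|X_\tau \cap (\{v\}\times E)| = |X_{\tau'} \cap (\{v\}\times E)|$ for every node $v$. Now observe what a $\Gamma$-operation does to $X_\tau$: at a node $v$, it toggles membership of two incidences $(v,uv)$ and $(v,vw)$ that currently have opposite status (one in $X_\tau$, one not), hence it moves one ``$-1$'' from one edge at $v$ to another edge at $v$ while fixing the count at $v$. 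So within the set of bidirections with a fixed underlying graph and fixed net-degree sequence, the reachable configurations are exactly those obtained by, independently at each node $v$, choosing which $d^-_v := |\{e : (v,e)\in X_\tau\}|$ of the edges incident to $v$ carry the $-1$. The second step is therefore the combinatorial claim: for a fixed node $v$ with a fixed number $k = d^-_v$ of its incident edges marked, any two $k$-subsets of the edge-set at $v$ are connected by single-element exchanges, each of which is realized by one $\Gamma$-operation at $v$; this is immediate since the symmetric-difference graph on $k$-subsets of a finite set is connected. Finally I would assemble these: process the nodes $v_1, \dots, v_n$ in order, and for each $v_i$ use $\Gamma$-operations at $v_i$ to make the incidences at $v_i$ agree between the current bigraph and $B'$. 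The only subtlety is that a $\Gamma$-operation at $v_i$ changes the $v_i$-signs of two edges but also changes the signs of those two edges at their \emph{other} endpoints; so I must check this does not undo earlier work. It does not, because a $\Gamma$-operation at $v_i$ alters an incidence $(u,e)$ at the other endpoint $u$ only for edges $e$ incident to $v_i$, and — here is the point — it always flips \emph{two} such incidences at $v_i$, but at a neighbour $u$ it could flip just one; to avoid this I restrict attention more carefully.

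The cleanest way around the bookkeeping obstacle, which I expect to be the main difficulty, is to quotient correctly: a $\Gamma$-operation at $v$ reverses the local orientations at $v$ of two edges, and its effect at the far endpoints is simply to reverse the orientation-as-seen-from-$v$ — but the pair $(\tau(v,e),\tau(u,e))$ of an edge $e = uv$ has four states, and the quantity that matters for the net-degrees is only $(\,|\{e : \tau(v,e)=-1\}|\,)_v$. So I would argue instead as follows. The net-degree sequence together with $G$ determines, for each node $v$, the number $d^-_v$ of edges whose $v$-incidence is $-1$, but conversely \emph{any} assignment, for each edge $e = uv$, of a state in $\{-1,+1\}^2$ to $(\tau(v,e),\tau(u,e))$ is allowed. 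Claim: all bidirections with the same $(d^-_v)_v$ lie in one $\Gamma$-orbit. Proof of the claim by induction on the number of edges of $G$ on which $\tau$ and $\tau'$ disagree at some endpoint: pick such an edge $e=uv$; if they disagree at $v$, since $d^-_v(\tau)=d^-_v(\tau')$ there is another edge $e'$ at $v$ where they disagree with opposite sign pattern, and a $\Gamma$-operation at $v$ on $\{e,e'\}$ strictly decreases the disagreement count at $v$ on $e$ — and one checks the total disagreement count over all edges does not increase, because the operation only touches $v$-incidences of $e$ and $e'$, both of which were already ``bad''. Iterating drives the disagreement to zero. I would write out the monovariant (total number of incidences $(w,f)$ with $\tau(w,f)\ne\tau'(w,f)$, restricted to $f$ incident to the node being processed) explicitly to confirm it is strictly decreasing under the chosen $\Gamma$-operations, which is the one routine verification this proof needs.
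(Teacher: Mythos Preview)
Your final argument is correct and is, in the end, exactly the paper's proof---but you tie yourself in knots getting there because of a misreading of the $\Gamma$-operation. The ``subtlety'' you raise in the middle paragraph, that a $\Gamma$-operation at $v_i$ ``also changes the signs of those two edges at their \emph{other} endpoints'', is simply false, and contradicts your own (correct) first paragraph. By definition the $\Gamma$-operation at $v$ swaps only the local orientations $\tau(v,uv)$ and $\tau(v,vw)$; the values $\tau(u,uv)$ and $\tau(w,vw)$ are untouched. There is therefore no bookkeeping obstacle at all.

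Once that misreading is corrected, your first plan goes through immediately and is precisely the paper's three-line proof: for each node $v$ in turn, the set of incidences $e$ with $\tau(v,e)\neq\tau'(v,e)$ has even size (because $d^-_v(\tau)=d^-_v(\tau')$ forces equally many disagreements of each sign), so pair them up and apply one $\Gamma$-operation per pair; since these operations touch only $v$-incidences, nothing already fixed at earlier nodes is disturbed. Your eventual monovariant argument is valid but is just this same idea rephrased with an explicit potential function, after an unnecessary detour.
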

\begin{proof}
    Let $B = (G,\tau)$ and $B' = (G,\tau')$ with underlying undirected graph
    $G = (V,E)$.  For a fixed node $v \in V$, the number of edges $e \in E$
    with $\tau(v,e) \neq \tau'(v,e)$ is even. Hence, we can pair up these
    edges and apply a $\Gamma$-operation to $B$ for every pair. Since the
    changes only affect the local orientations at $v$, we may repeat this
    process for all the remaining nodes.
\end{proof} 

The $\Sigma$- or $2$-switch operation on undirected graphs has a natural
extension to bidirected graphs: A $\Sigma$-operation locally replaces an edge
$e$ incident to a node $v$ with a new incident edge $e'$. If $G$ is the graph
underlying a bidirected graph $B$, then a $\Sigma$-operation applied to $B$
assigns a local orientation to $e'$ by $\tau(v,e') := \tau(v,e)$; see
Figure~\ref{FigureOriented2Switch} for a depiction. Note that a
$\Sigma$-operation on $B$ induces a $\Sigma$-operation on the underlying
undirected graph $|B|$. Combining this observation with the Havel--Hakimi
theorem~\cite{Havel,Hakimi} yields the following corollary.

\begin{cor}\label{cor:bdirHH}
    Let $B$ and $B'$ be bigraphs such that their underlying graphs $|B|$ and
    $|B'|$ have identical degree sequences. Then $B$ can be transformed using
    $\Sigma$-operations into a bigraph with underlying graph $|B'|$.
\end{cor}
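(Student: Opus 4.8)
The plan is to lift the Havel--Hakimi transformation from the underlying undirected graphs up to the bigraphs, exploiting the observation recorded just above the statement: a $\Sigma$-operation on a bigraph $B$ descends to a $\Sigma$-operation (that is, a $2$-switch) on $|B|$. Since $|B|$ and $|B'|$ are simple undirected graphs on the common node set $\{1,\dots,n\}$ with the same degree sequence, the Havel--Hakimi theorem supplies a finite chain of $2$-switches
\[
    |B| \ = \ G_0 \ \to \ G_1 \ \to \ \cdots \ \to \ G_k \ = \ |B'|,
\]
in which each $G_{\ell+1}$ arises from $G_\ell$ by one $2$-switch. I would then build a parallel chain of bigraphs $B = B_0, B_1, \dots, B_k$ with $|B_\ell| = G_\ell$ for every $\ell$; the last term $B_k$ is then a bigraph with underlying graph $G_k = |B'|$, which is exactly what is claimed.

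The chain of bigraphs is constructed inductively. Put $B_0 := B$, and suppose $B_\ell = (G_\ell,\tau_\ell)$ has been defined. If the $2$-switch $G_\ell \to G_{\ell+1}$ deletes the edges $uv, wx$ and inserts $uw, vx$, let $B_{\ell+1} = (G_{\ell+1},\tau_{\ell+1})$ be the result of the corresponding $\Sigma$-operation on $B_\ell$: keep the local orientation of every edge lying in both $G_\ell$ and $G_{\ell+1}$, and orient the two new edges by inheriting, at each endpoint, the local orientation of the deleted edge sharing that endpoint, i.e.\ $\tau_{\ell+1}(u,uw):=\tau_\ell(u,uv)$, $\tau_{\ell+1}(w,uw):=\tau_\ell(w,wx)$, $\tau_{\ell+1}(v,vx):=\tau_\ell(v,uv)$, and $\tau_{\ell+1}(x,vx):=\tau_\ell(x,wx)$. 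Because $G_\ell$ and $G_{\ell+1}$ are simple, this introduces neither loops nor parallel edges, so $B_{\ell+1}$ is again a bigraph, and by construction $|B_{\ell+1}| = G_{\ell+1}$. Performing these $k$ $\Sigma$-operations in turn carries $B = B_0$ to $B_k$, a bigraph with underlying graph $|B'|$.

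I do not expect a genuine obstacle; the single point that needs checking is that every $2$-switch admissible on $G_\ell$ really can be realised by a $\Sigma$-operation on an arbitrary bigraph $B_\ell$ with $|B_\ell| = G_\ell$ — and this is immediate, since such a lift has only to prescribe the bidirection function on the two freshly created edges, and the inheritance rule above does precisely that (in fact canonically). I would also remark in passing that, with this choice, each $\Sigma$-operation leaves the net-degree sequence unchanged: at each of $u, v, w, x$ the $\pm 1$ contribution of the deleted edge is replaced by the identical contribution of the new edge. Thus $B$ is in fact transformed into a bigraph with underlying graph $|B'|$ and the same net-degree sequence as $B$, although the corollary as stated only records the conclusion about underlying graphs.
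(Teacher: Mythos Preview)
Your proof is correct and follows exactly the approach the paper intends: the paper does not write out a proof at all, merely stating that the corollary follows by combining the Havel--Hakimi theorem with the observation that a $\Sigma$-operation on a bigraph descends to a $2$-switch on the underlying graph. You have simply made explicit the lifting of each $2$-switch to a $\Sigma$-operation on the bigraph level, which is precisely what the paper leaves implicit.
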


\begin{figure}[ht]
\centering
\begin{overpic}[width=.1\textwidth]{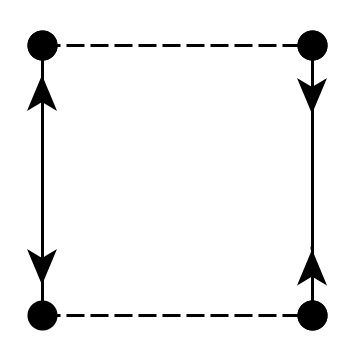}
\end{overpic}
\quad
\raisebox{0.75cm}
{
	\scalebox{1.5}{$\longleftrightarrow$}
}
\quad
\begin{overpic}[width=.1\textwidth]{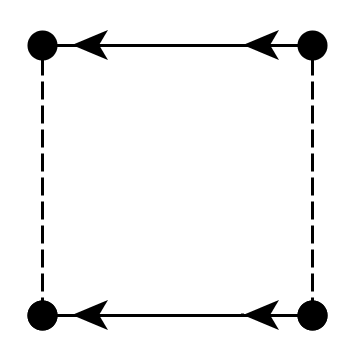}
\end{overpic}
\caption{The $\Sigma$-operation.}
\label{FigureOriented2Switch}
\end{figure}

As for directed graphs, an edge $uw$ in a bigraph $B$ may be replaced by edges
$uv, vw$ where $v\in V$ is a node not adjacent to either $u$ or $w$. This
procedure is net-degree preserving if we set $\tau(v,uv) = -\tau(v,vw) = 1$.
This operation and its inverse is called a \Defn{$\boldsymbol \Lambda$-operation};
Figure~\ref{FigureDefPathAbbr} illustrates the operation.

\begin{figure}[ht] 
\centering
\begin{overpic}[width=.1\textwidth]{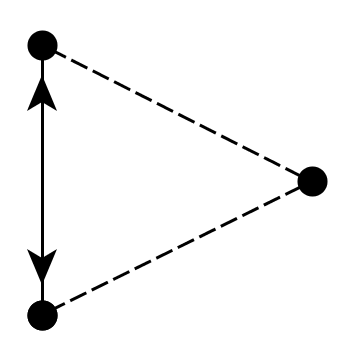}
\put(-11,85){$u$}
\put(-12,8){$w$}
\put(99,45){$v$}
\end{overpic}
\quad
\raisebox{0.5cm}
{
	\scalebox{1.5}{$\longleftrightarrow$}
}
\quad
\begin{overpic}[width=.1\textwidth]{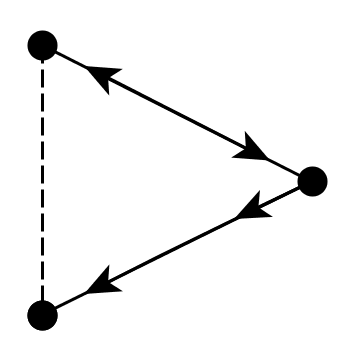}
\put(-11,85){$u$}
\put(-12,8){$w$}
\put(99,45){$v$}
\end{overpic}
\caption{The $\Lambda$-operation.}
\label{FigureDefPathAbbr}
\end{figure}

Finally, we extend our repertoire of operations by a
\Defn{$\boldsymbol\Delta$-operation} that corresponds to adding or deleting a
bidirected triangle: If $u_1,u_2,u_3$ are vertices of a bigraph $B$ with edges
$u_1u_2, u_2u_3,$ $u_3u_1$ such that $\tau(u_i,u_iu_j) \neq \tau(u_j,u_iu_j)$ for
$i\neq j$, then we may delete the three edges without changing the net-degree
of $B$. 

\begin{figure}[ht] 
\centering
\begin{overpic}[width=.1\textwidth]{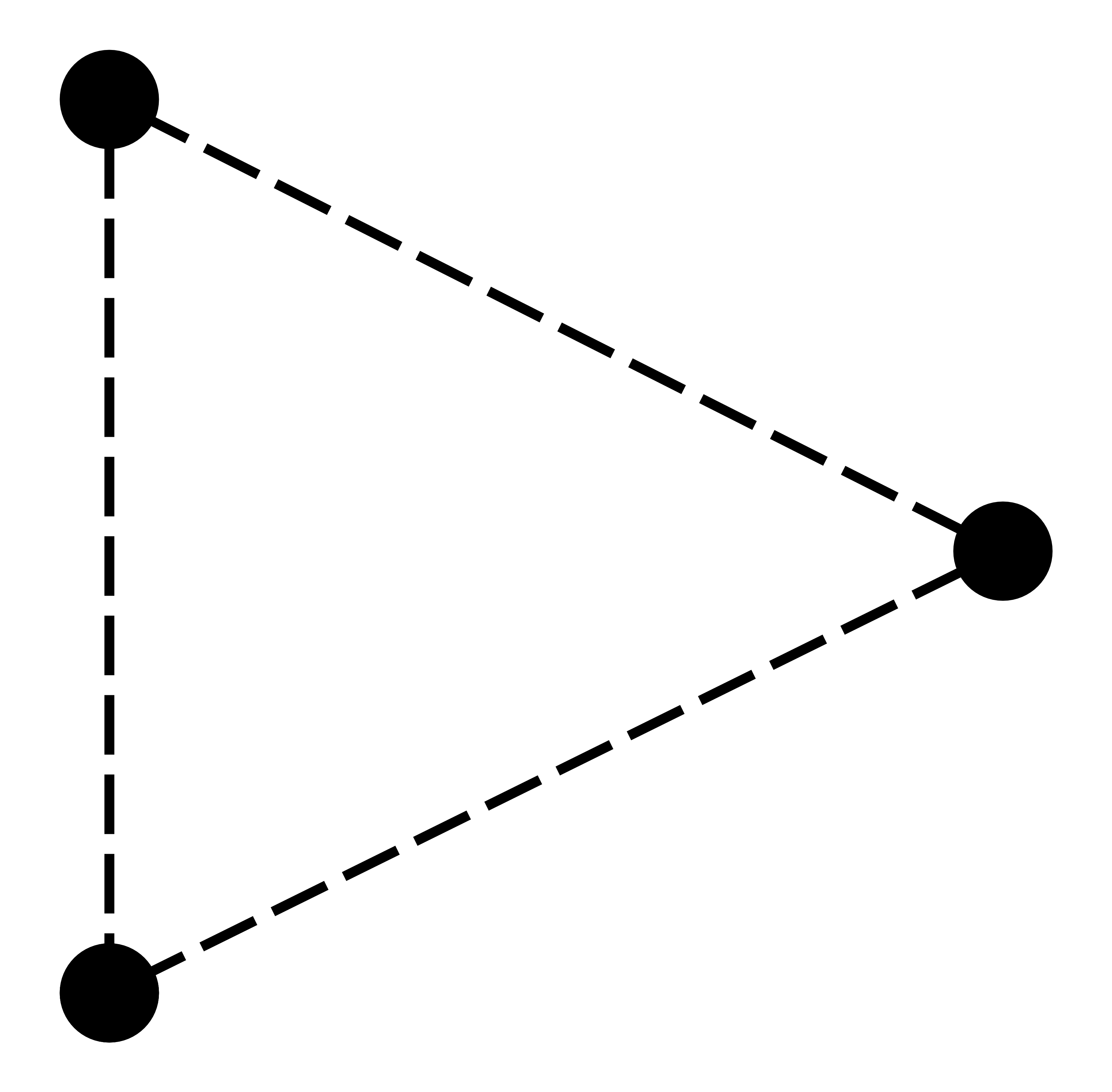}
\put(-21,85){$u_1$}
\put(-21,8){$u_2$}
\put(100,45){$u_3$}
\end{overpic}
\quad
\raisebox{0.5cm}
{
	\scalebox{1.5}{$\longleftrightarrow$}
}
\quad
\begin{overpic}[width=.1\textwidth]{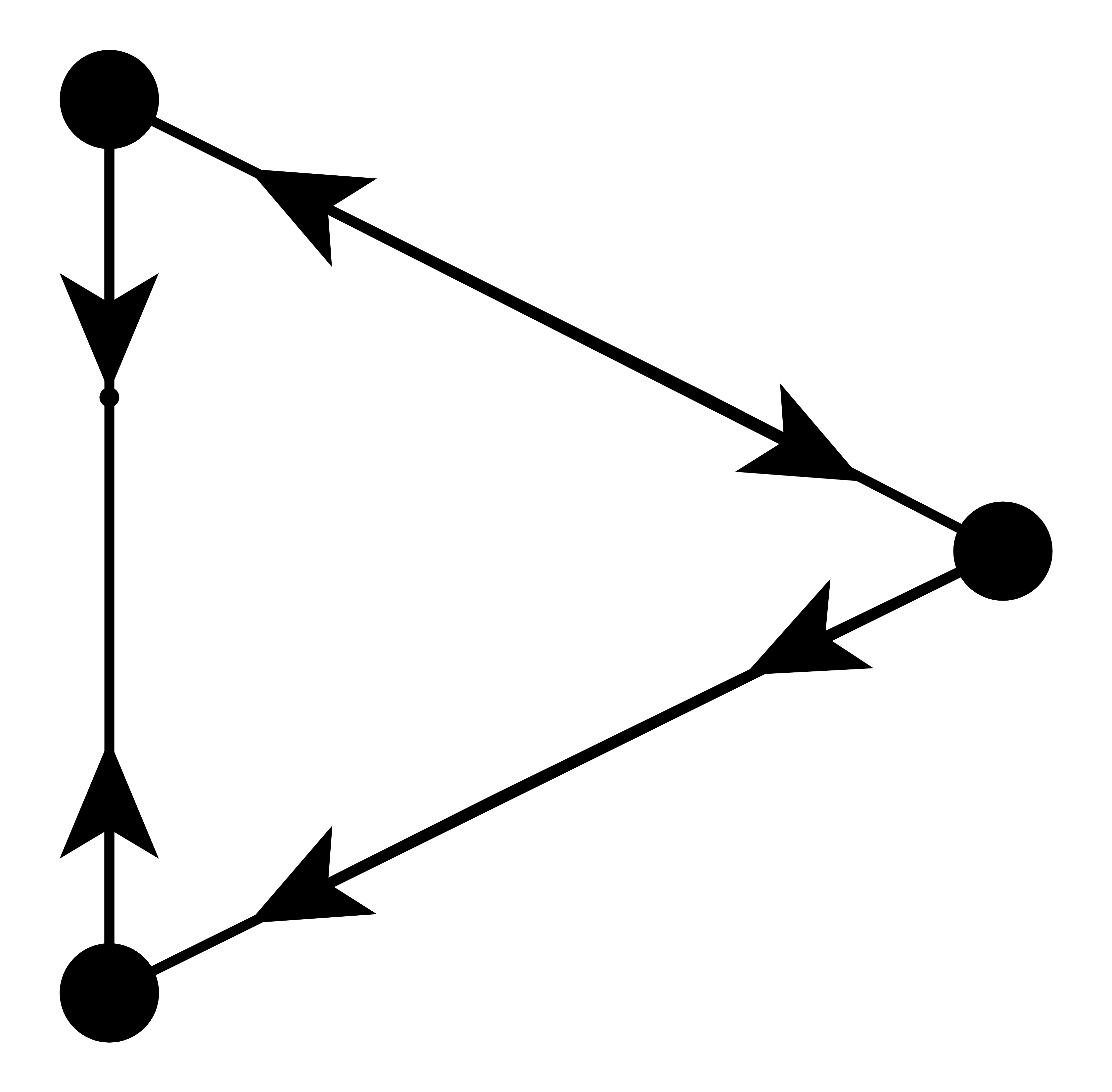}
\put(-21,85){$u_1$}
\put(-21,8){$u_2$}
\put(100,45){$u_3$}
\end{overpic}
\caption{The $\Delta$-operation.}
\label{FigureDefDelta-Op}
\end{figure}

More generally, a \Defn{bidirected $\boldsymbol k$-cycle} in a bidirected
graph $B = (G,\tau)$ is an undirected cycle $u_1,\dots,u_k$ in $G$ such that
$\tau(u_i,u_{i-1}u_i) \neq \tau(u_i,u_iu_{i+1})$ for $i=2,\dots,k$ (and the
convention that $u_{k+1} := u_1$). Clearly, the net-degree of bidirected
cycles is identically zero and we could extend $\Delta$-operations to all
bidirected cycles. The next lemma, however, shows that this can always be
achieved essentially by $\Delta$- and $\Lambda$-operations.

\begin{lem} \label{lem:bi-cycle-addition}
    Let $B$ be a bidirected graph. Then adding a bidirected cycle can be obtained
    by a sequence of $\Delta$-, $\Lambda$-, and $\Gamma$-operations.
\end{lem}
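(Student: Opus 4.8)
The plan is to induct on the length $k\ge 3$ of the bidirected cycle $C=(u_1,\dots,u_k)$ we want to add to $B$; write $e_i=u_iu_{i+1}$ (indices mod $k$) for its edges, so that $e_1,\dots,e_k$ are non-edges of $|B|$ by hypothesis. The device that makes the induction run smoothly is Lemma~\ref{lem:bid_Gamma}: it suffices to produce from $B$, by a sequence of $\Delta$-, $\Lambda$-, and $\Gamma$-operations, \emph{some} bigraph $B''$ whose underlying graph $|B''|$ equals $|B|$ with the edges $e_1,\dots,e_k$ added and whose net-degree sequence is $\ds(B'')=\ds(B)$; a further sequence of $\Gamma$-operations then carries $B''$ to $B+C$, because $\ds(B+C)=\ds(B)$ (bidirected cycles have net-degree zero) and $|B''|=|B+C|$. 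This decouples building the correct \emph{underlying graph} from getting the \emph{local orientations} right, so every $\Delta$- and $\Lambda$-operation below may be carried out with whatever orientation is convenient, and the choice of local orientations for the auxiliary cycle supplied by the inductive hypothesis is immaterial.

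For the base case $k=3$ the vertices $u_1,u_2,u_3$ are pairwise non-adjacent in $B$ (all three pairs are edges of $C$, hence non-edges of $|B|$), so a $\Delta$-operation adds a directed triangle on them; this yields a bigraph with the right underlying graph and unchanged net-degrees, and we conclude as above. For the inductive step $k\ge 4$ put $g:=u_1u_3$ and let $C'$ be the bidirected $(k-1)$-cycle on $u_1,u_3,u_4,\dots,u_k$ with edge set $\{g\}\cup\{e_3,\dots,e_k\}$. If $g$ is \emph{not} an edge of $|B|$, then by the inductive hypothesis we may add $C'$ to $B$ using $\Delta$-, $\Lambda$-, and $\Gamma$-operations, and then apply a $\Lambda$-operation subdividing the edge $g$ by the vertex $u_2$; this is legal because $e_1=u_1u_2$ and $e_2=u_2u_3$ are non-edges of $B$ and $u_2\notin V(C')$, so $u_2$ is non-adjacent to both $u_1$ and $u_3$ in $B+C'$. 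If instead $g$ \emph{is} an edge of $|B|$, we first apply the $\Lambda$-operation subdividing $g$ by $u_2$ (legal for the same reason), which deletes $g$ and inserts $e_1,e_2$, and only then invoke the inductive hypothesis to add $C'$, which reinstates $g$ together with $e_3,\dots,e_k$. In either case the resulting bigraph has underlying graph $|B|$ with the edges $e_1,\dots,e_k$ added, and, since every operation used preserves all net-degrees, it has net-degree sequence $\ds(B)$; one last application of Lemma~\ref{lem:bid_Gamma} finishes the step.

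The one point that genuinely needs care — and the reason the naive ``add the cycle one edge at a time'' or ``add a directed triangle on $u_1,u_2,u_3$ and then extend'' approaches break down — is that $\Delta$- and $\Lambda$-operations can only be performed at vertices that are currently non-adjacent, so chords of $C$ that already lie in $|B|$ can obstruct the obvious reduction. The case distinction on whether $g=u_1u_3$ belongs to $|B|$ is exactly what sidesteps this: when $g$ is absent we first bring it into existence as an auxiliary edge of the shorter cycle $C'$ and then subdivide it, whereas when $g$ is present we subdivide it first and let $C'$ restore it. I expect the main chore to be the bookkeeping: checking after each step that the edge sets are precisely as claimed and that the two $\Lambda$-operations really do satisfy their non-adjacency hypotheses — but nothing there goes beyond the observations above.
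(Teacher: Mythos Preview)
Your proof is correct and shares the paper's overall skeleton --- induction on the cycle length, reduction to a shorter cycle via a $\Lambda$-operation on a chord, and a final clean-up of local orientations using Lemma~\ref{lem:bid_Gamma} --- but your handling of the obstructing-chord case is genuinely tidier. The paper first checks whether \emph{some} chord $i,i{+}2$ is absent (in which case it proceeds exactly as you do in your ``$g\notin|B|$'' case), and otherwise, when \emph{all} such chords are present, branches again on $m=4$ versus $m\ge 5$: for $m\ge 5$ it simultaneously subdivides every chord $2j{-}1,\,2j{+}1$ and then inductively adds the cycle on the odd-indexed vertices. By contrast, you look at the single chord $g=u_1u_3$ and simply swap the order of the two steps depending on whether $g$ is already present (subdivide then induct, versus induct then subdivide). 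This yields a uniform argument with no secondary case split, and the bookkeeping you flag --- that the $\Lambda$-operation at $u_2$ is legal and that the edges of $C'$ are non-edges at the moment induction is invoked --- goes through exactly as you say. One small point worth making explicit: the induction is over all bigraphs simultaneously, so in your ``$g\in|B|$'' branch you are entitled to apply the hypothesis to the modified bigraph $B_1$ obtained after the first $\Lambda$-operation, not to $B$ itself.
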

\begin{proof}
    Let us assume that we want to add a cycle of length $m$ on the nodes
    $1,2,\dots,m$. To ease notation, we identify $m+1$ with $1$.  We prove the
    claim by induction on $m$ with $m=3$ as the base case. If $i,i+2$ is not
    an edge for some $i=1,\dots,m-1$, then we can add the cycle $1, \dots, i,
    i+2, \dots, m$ by induction and apply a $\Lambda$-operation to replace the
    edge $i,i+2$ by $i,i+1$ and $i+1,i+2$. Hence, we have to assume that all
    edges $i,i+2$ for $i=1,\dots,m-1$ are present.  If $m=4$, we replace the
    edge between $1$ and $3$ by the edges $1,2$ and $2,3$. Then, we add the
    triangle between $1, 3$ and $4$.  If $m \geq 5$,  then for all vertices
    $2j$ with $j = 1,\dots,\lfloor\frac{m}{2}\rfloor$, we use a
    $\Lambda$-operation to replace the edge $2j-1,2j+1$ with the path
    $2j-1,2j,2j+1$ without changing the local orientation at $2j\pm 1$.  We
    may now add the cycle on the vertices $1,3,\dots,2\lceil\frac{m}{2}\rceil
    -1$. 

    The resulting bigraph $B'$ has already the correct underlying undirected
    graph and degree sequence $\ds(B') = \ds(B)$ and appealing to
    Lemma~\ref{lem:bid_Gamma} completes the proof.
\end{proof}

An argument analogous to the above shows that we can add arbitrary bidirected
paths using essentially $\Lambda$-operations. A \Defn{bidirected path} in a
bidirected graph $B = (G,\tau)$ is an undirected path $u_1,\dots,u_k$ in $G$
such that $\tau(u_i,u_{i-1}u_i) \neq \tau(u_i,u_iu_{i+1})$ for
$i=2,\dots,k-1$.

\begin{lem}\label{lem:bi-path-addition}
    Let $B$ be a bidirected graph. Then adding a bidirected path can be obtained
    by a sequence of $\Lambda$- and $\Gamma$-operations.
\end{lem}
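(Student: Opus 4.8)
I would run the argument in close parallel to the proof of Lemma~\ref{lem:bi-cycle-addition}, with the role of the $\Delta$-operation — which there is used to close up short cycles — taken over by a recursion on the single edge that the path replaces. First I would pin down the meaning: ``adding a bidirected path'' is the passage from $B$ to the bigraph $B'$ obtained by replacing one edge $u_1u_k$ of $B$ with a bidirected path $u_1,u_2,\dots,u_k$ whose internal edges $u_iu_{i+1}$ are not edges of $B$ and whose local orientations at $u_1$ and $u_k$ agree with those of $u_1u_k$ (the case $k=3$ being exactly one $\Lambda$-operation). Since $\Lambda$- and $\Gamma$-operations preserve the net-degree sequence and $\ds(B)=\ds(B')$, it is enough — just as in Lemma~\ref{lem:bi-cycle-addition} — to produce by $\Lambda$-operations alone \emph{some} bigraph with underlying graph $|B'|$; Lemma~\ref{lem:bid_Gamma} then supplies the $\Gamma$-operations that repair all remaining local orientations, in particular those at the interior vertices $u_2,\dots,u_{k-1}$.

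The induction is on $k$, with $k\le 3$ as base cases ($k\le 2$ is trivial, and $k=3$ is a single $\Lambda$-operation splitting $u_1u_3$ into $u_1u_2$, $u_2u_3$, which is legal because $u_2$ is adjacent in $B$ to neither $u_1$ nor $u_3$). For $k\ge 4$ I would split into two cases according to the ``chords'' $u_iu_{i+2}$, $i=1,\dots,k-2$. If some $u_iu_{i+2}\notin E(B)$: apply the inductive hypothesis to replace $u_1u_k$ by the shorter bidirected path $u_1,\dots,u_i,u_{i+2},\dots,u_k$ on $k-1$ vertices (omitting $u_{i+1}$, which keeps the edge $u_iu_{i+2}$ available), then split $u_iu_{i+2}$ into $u_iu_{i+1}$, $u_{i+1}u_{i+2}$; since $u_{i+1}$ is never touched it stays non-adjacent to $u_i$ and $u_{i+2}$, so this last $\Lambda$-operation is legal. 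If all $u_iu_{i+2}\in E(B)$: split every odd-indexed chord $u_1u_3,u_3u_5,u_5u_7,\dots$ into a $2$-path (each step legal, as the relevant even-indexed intermediate vertex is adjacent only to odd-indexed, non-neighbouring vertices); this installs the path edges $u_1u_2,u_2u_3,u_3u_4,\dots$ while deleting exactly those odd chords. What then remains in order to reach $|B'|$ is to put the deleted odd chords back, to install the one missing path edge near $u_k$ when $k$ is even, and to delete $u_1u_k$ — and that is precisely the replacement of the still-present edge $u_1u_k$ by a bidirected path on the odd-indexed vertices $u_1,u_3,u_5,\dots$ ending at $u_k$, a path with strictly fewer vertices, which the inductive hypothesis handles.

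I expect the delicate point to be the bookkeeping in the all-chords-present case: one must check that $u_1u_k$ survives the splitting of the odd chords, that the chords to be reinstated are genuinely absent afterwards, and that the auxiliary bidirected path on the odd-indexed vertices really targets the correct underlying graph — in particular that the even-indexed chords, which must stay in $|B'|$, are left alone, and that the off-by-one coming from the parity of $k$ is tracked correctly (the analogue of the $\lfloor m/2\rfloor$ versus $\lceil m/2\rceil$ split in Lemma~\ref{lem:bi-cycle-addition}). Everything else is routine: once the underlying graph equals $|B'|$ the net-degree sequence is automatically $\ds(B)=\ds(B')$, and a final appeal to Lemma~\ref{lem:bid_Gamma} completes the proof.
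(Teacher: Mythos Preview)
Your proposal is correct and is precisely the argument the paper intends: the paper gives no proof of this lemma beyond the sentence ``An argument analogous to the above shows that we can add arbitrary bidirected paths using essentially $\Lambda$-operations,'' and you have worked out that analogy in full, mirroring the case split (missing chord versus all chords present) from the proof of Lemma~\ref{lem:bi-cycle-addition} with the final recursive replacement of $u_1u_k$ by the path on odd-indexed vertices playing the role of the $\Delta$-operation. Your reading of ``adding a bidirected path'' as replacing an existing edge $u_1u_k$ by a longer bidirected path is the right one---it matches exactly how the lemma is invoked in the proof of Theorem~\ref{thm:bid_Ops}---and the bookkeeping you flag as delicate (survival of $u_1u_k$, absence of the reinstated chords, the parity-dependent endpoint of the auxiliary path) checks out.
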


The following theorem asserts that  $\Gamma$-, $\Sigma$-, $\Lambda$-, 
and $\Delta$-operations suffice to navigate
among bigraphs with the same net-degree sequence. The bigraphs in
Figures~\ref{FigureBidirectionChange}, \ref{FigureDefPathAbbr},
and~\ref{FigureDefDelta-Op} show that the $\Gamma$-, $\Lambda$-, and
$\Delta$-operations are necessary. To see that the $\Sigma$-operation is
necessary it suffices to consider $4$-cycles where every edge points into its two end nodes.

\begin{thm} \label{thm:bid_Ops}
    Let $B$ and $B'$ be bigraphs with $\ds(B) = \ds(B')$. Then there is
    a finite sequence of $\Gamma$-, $\Sigma$-, $\Lambda$-, and
    $\Delta$-operations that transforms $B$ into $B'$.
\end{thm}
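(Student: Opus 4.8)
The plan is to reduce the general statement to the three partial results already established: Lemma~\ref{lem:bid_Gamma} (same underlying graph $\Rightarrow$ $\Gamma$-operations suffice), Corollary~\ref{cor:bdirHH} ($\Sigma$-operations reach any bigraph with the prescribed underlying degree sequence), and Lemmas~\ref{lem:bi-cycle-addition} and~\ref{lem:bi-path-addition} (adding bidirected cycles/paths is achievable by $\Delta$-, $\Lambda$-, $\Gamma$-operations). The strategy is to pick a canonical bigraph $B_0$ depending only on $\ds := \ds(B) = \ds(B')$ and to show that both $B$ and $B'$ can be transformed into $B_0$; since every operation is reversible, this proves the theorem. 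A natural candidate for $B_0$ is the bigraph constructed in the proof of Theorem~\ref{thm:bErdGal}(iii) $\Rightarrow$ (i): its underlying graph is $K_n$ minus a matching on $T = \{ i : d_i \not\equiv n-1 \bmod 2 \}$, with a specific bidirection realizing $\ds$.

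First I would take an arbitrary bigraph $B = (G,\tau)$ with $\ds(B) = \ds$. The underlying degree sequence $\ds(G)$ and the target degree sequence $\ds(G_0) = \ds(|B_0|)$ need not agree, so I cannot apply Corollary~\ref{cor:bdirHH} directly. The key preliminary step is therefore to \emph{enlarge} $G$ so that its degree sequence becomes that of $|B_0|$ (which is the componentwise maximum allowed by~\eqref{eqn:Bkey}, namely $n-1$ or $n-2$ at each node). Concretely, whenever a node $i$ has $\ds(G)_i$ strictly below its target and there is a non-neighbour $j$ likewise deficient, one can add the edge $ij$ with a local bidirection that does not change net-degrees — this is precisely an inverse $\Lambda$-operation followed, if necessary, by a $\Gamma$-operation, or more cleanly an application of Lemma~\ref{lem:bi-path-addition}/\ref{lem:bi-cycle-addition} to add a bidirected path or cycle through the deficient nodes. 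Since $\ds(G)_i \equiv d_i \equiv \ds(|B_0|)_i \bmod 2$ for every $i$, the deficiencies are even and can always be corrected in this way; one must check that the deficient nodes can be connected up appropriately, but this is a routine matching/parity argument using that the complement of $G$ on the deficient set has even degrees.

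Once $B$ has been transformed (via $\Lambda$-, $\Gamma$-, and $\Delta$-operations) into a bigraph $\tilde B$ whose underlying graph $\tilde G$ has exactly the degree sequence of $|B_0|$, I apply Corollary~\ref{cor:bdirHH}: a sequence of $\Sigma$-operations carries $\tilde B$ into a bigraph $\hat B$ with $|\hat B| = |B_0|$, and $\Sigma$-operations preserve net-degree, so $\ds(\hat B) = \ds$. Finally $|\hat B| = |B_0|$ and $\ds(\hat B) = \ds(B_0)$, so Lemma~\ref{lem:bid_Gamma} finishes the job with $\Gamma$-operations. Running the same procedure on $B'$ and concatenating with the reverse of the $B$-chain gives the desired transformation. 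The main obstacle I anticipate is the enlargement step: one must verify that the edges needed to bring $G$ up to the degree sequence of $|B_0|$ can genuinely be added by the allowed operations — in particular that the auxiliary nodes required by a $\Lambda$-operation (a node not adjacent to either endpoint) are available, or else that one can route the correction through bidirected cycles as in Lemma~\ref{lem:bi-cycle-addition}. Handling the small or dense cases (where few non-edges remain) and the parity bookkeeping on $T$ is where the real care is needed; everything after that is an assembly of results already in hand.
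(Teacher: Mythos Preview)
Your strategy coincides with the paper's: transform each of $B$ and $B'$ into a bigraph whose underlying graph has the maximal admissible ordinary degree sequence ($n-1$ on $[n]\setminus T$ and $n-2$ on $T$), then invoke Corollary~\ref{cor:bdirHH} and finish with Lemma~\ref{lem:bid_Gamma}. The enlargement step you flag as the obstacle is handled in the paper by taking an Euler-type decomposition of the complement $H=K_n\setminus G$ into edge-disjoint cycles and paths (the path endpoints being precisely the elements of $T$), then adding each cycle via Lemma~\ref{lem:bi-cycle-addition} and, for each path of length~$>1$, replacing the edge between its endpoints by that path via Lemma~\ref{lem:bi-path-addition}---this sidesteps any ad hoc availability or parity bookkeeping.
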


\begin{proof}
    Let $B = (G,\tau)$ and $B' = (G',\tau')$ on $n$ nodes.  The
    strategy of the proof is as follows. We show that we can transform both
    $B$ and $B'$ into bigraphs $B_0$ and $B'_0$ such that their underlying
    undirected graphs $G_0$ and $G'_0$ have identical (ordinary) degree
    sequences. Corollary~\ref{cor:bdirHH} then shows that we can assume that
    $G_0 = G'_0$ and Lemma~\ref{lem:bid_Gamma} completes the proof.

    Let $T := \{i \in [n] : \ds(B)_i \not\equiv n-1 \ \mathrm{mod} \ 2\}$ be
    the set of odd-degree vertices in $H := K_n \setminus G$. Since $T$ is of even
    size, we can label its elements $v_1, v_2,\dots, v_{2m}$.  The edge set
    $E(H)$ can be decomposed into edge-disjoint cycles $C_1,\dots,C_s$ and
    paths $P_1,\dots,P_m$ such that, without loss of generality,  $P_i$ has
    endpoints $v_{2i-1}$ and $v_{2i}$; see~\cite[Section 29.1]{Schri}.  For
    every $j=1,\dots,s$, we apply Lemma~\ref{lem:bi-cycle-addition} to add a
    bidirected cycle to $B$ with underlying graph $C_j$.  By the same token,
    if $P_i$ is of length $> 1$, then we can replace the bidirected edge
    between $v_{2i-1}$ and $v_{2i}$ in $B$ by a bidirected path with
    underlying graph $P_i$ using Lemma~\ref{lem:bi-path-addition}. The
    resulting bigraph $B_0$ has net-degree sequence $\ds(B)$ and, more
    importantly, every node $v$ in $|B_0| = G_0$ has ordinary degree $n-2$ if
    $v \in T$ and $n-1$ otherwise. Repeating this process for $B'$, we obtain
    our desired bigraphs $B_0$ and $B'_0$.
\end{proof}

A complete set of degree-preserving operations as given in
Theorem~\ref{thm:bid_Ops} also gives insight into the class of uniquely
realizable net-degree sequences. A bigraph has a uniquely realizable degree
sequence if and only if no degree-preserving operation is applicable. Recall
that a node $v$ is a sink or a source if all edges are pointing into $v$ or
away from $v$, respectively.

\begin{cor} \label{cor:bid_unique}
    The net-degree sequence of a bigraph $B$ is uniquely realizable if and
    only if every node is a sink or a source and there is at most one pair
    of nodes not joined by an edge.

    In particular, a net-degree sequence $\ds = (d_1,\dots,d_n) \in \Pbid_n
    \cap \Z^n$ is uniquely realizable if and only if $n-2 \le |d_i| \le n-1$
    for all $i=1,\dots,n$ and there at most two entries with absolute value
    $n-2$. 
\end{cor}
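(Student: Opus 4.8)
The proof naturally splits into the combinatorial characterization and the arithmetic reformulation. First I would establish the combinatorial statement: a bigraph $B = (G,\tau)$ has uniquely realizable net-degree sequence iff no $\Gamma$-, $\Sigma$-, $\Lambda$-, or $\Delta$-operation can be applied to it, by Theorem~\ref{thm:bid_Ops}. So the task is to decode exactly which bigraphs forbid all four operations. A $\Gamma$-operation is available at a node $v$ precisely when $v$ has two incident edges $uv, vw$ with $\tau(v,uv) \neq \tau(v,vw)$; forbidding this at every node says exactly that every node is a sink or a source. Assume from now on this holds. A $\Lambda$-operation (adding a subdivision vertex) requires a non-adjacent vertex $v$ together with an edge $uw$; once $n \ge 3$, the only way to forbid it is to have no non-adjacent triple, i.e.\ $G$ has at most one non-edge. (If there were two independent non-edges, or a vertex missing two edges, one checks a $\Lambda$- or a $\Delta$-move becomes available.) The $\Delta$-operation adds a bidirected triangle on three pairwise non-adjacent vertices; under "every node is a sink or a source" plus "at most one non-edge," there are never three pairwise non-adjacent vertices, so $\Delta$ is automatically unavailable. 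The $\Sigma$-operation replaces an incident edge $e$ at $v$ by a new incident edge $e'$; forbidding it means no such swap exists, and one checks that when $G$ has at most one non-edge this is again automatic (there is simply no room to re-route an edge). Conversely, if $G$ is $K_n$ or $K_n$ minus one edge and every node is a sink/source, one verifies directly that all four operations are blocked. This proves the first assertion.

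**From bigraphs to sequences.** For the "in particular" clause I would translate the structural description into the numerics of $\ds$. If $B$ realizes $\ds$ uniquely, then by the above $G = |B|$ is $K_n$ or $K_n$ minus an edge, and each node is a pure sink or pure source, so $|d_i| = \ds(G)_i \in \{n-1, n-2\}$; moreover $d_i = n-2$ (as absolute value) exactly for the one or two endpoints of the missing edge, giving at most two entries of absolute value $n-2$. Conversely, suppose $\ds \in \Pbid_n \cap \Z^n$ satisfies $n-2 \le |d_i| \le n-1$ with at most two indices $i$ where $|d_i| = n-2$. By Theorem~\ref{thm:bErdGal} there is some realizing bigraph; I would argue that in fact the realization with $G$ taken to be $K_n$ (if all $|d_i| = n-1$, i.e.\ no index has $|d_i| = n-2$ — but then the parity condition forces, since $\sum d_i$ even and $n(n-1)$ may be odd, reconsider) or $K_n$ minus one edge between the two exceptional vertices is forced, and then the sink/source orientations are forced by the requirement that $\sum_e \tau(v,e) = d_i = \pm\ds(G)_i$ with $\ds(G)_i$ equal to the full local degree. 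Since such a $B$ admits no operation, $\ds$ is uniquely realizable; and since any realization must have the same underlying degree sequence $\ds(G)_i = |d_i|$ and hence the same (essentially complete) underlying graph, the realization is genuinely unique.

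**Main obstacle.** The delicate point is the careful case analysis showing that, once every node is a sink or a source, the $\Sigma$- and $\Lambda$- (and $\Delta$-) operations are unavailable \emph{iff} $|B|$ has at most one non-edge — in particular ruling out the configuration of a single vertex missing two edges, or two disjoint non-edges, by exhibiting an explicit operation in each case. One must also be careful with small $n$ (the cases $n \le 2$, where every sequence with even sum in range is trivially uniquely realizable, and $n = 3$) and with the parity bookkeeping in the arithmetic reformulation: the number of indices with $|d_i| = n-2$ must have the right parity relative to $n$ for a realization to exist, which is exactly the content of the set $T$ in the proof of Theorem~\ref{thm:bErdGal} being of even size. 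Everything else is a direct unwinding of definitions once Theorem~\ref{thm:bid_Ops} is in hand.
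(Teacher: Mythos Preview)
Your overall strategy---use Theorem~\ref{thm:bid_Ops} to reduce unique realizability to ``no operation applies,'' then decode which bigraphs block all four operations---is exactly the intended one, and your treatment of the $\Gamma$-operation and of the arithmetic reformulation is correct. There is, however, a genuine error in the case analysis.

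You write that forbidding the forward $\Lambda$-operation forces at most one non-edge, and in parentheses that ``if there were two independent non-edges \dots\ one checks a $\Lambda$- or a $\Delta$-move becomes available.'' This is false. Suppose every node is a sink or a source and the non-edges of $|B|$ form a perfect matching of size $\ge 2$, say $ab$ and $cd$ are two disjoint non-edges and all other pairs among $a,b,c,d$ are edges. Then no vertex has two non-neighbors, so the forward $\Lambda$-operation (which needs a vertex $v$ non-adjacent to \emph{both} endpoints of an edge) is blocked; the inverse $\Lambda$ is blocked by the sink/source hypothesis; and there are no three pairwise non-adjacent vertices, so the $\Delta$-add is blocked (and $\Delta$-delete is blocked by sink/source). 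What \emph{is} available in this configuration is a $\Sigma$-operation: take $u=a$, $v=c$, $w=b$, $x=d$; then $ac, bd \in E$ while $ab, cd \notin E$, so the $2$-switch replacing $ac, bd$ by $ab, cd$ applies. Thus the case ``two or more non-edges, all disjoint'' is handled by $\Sigma$, not by $\Lambda$ or $\Delta$. Once you reroute this case through $\Sigma$, the remaining case (some vertex $v$ has two non-neighbors $u,w$) does split as you indicate: if $uw \in E$ a forward $\Lambda$ applies, and if $uw \notin E$ a $\Delta$-add applies.

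A smaller imprecision: your description of the forward $\Lambda$ as ``a non-adjacent vertex $v$ together with an edge $uw$'' understates the hypothesis; $v$ must be non-adjacent to \emph{both} $u$ and $w$. This is exactly why the matching case above slips through. With these two corrections the argument is complete; the converse direction (at most one non-edge and all sinks/sources $\Rightarrow$ no operation applies) and the numerical translation via $|d_i| = \ds(|B|)_i$ are fine as you have them, including the parity observation that exactly one entry of absolute value $n-2$ is impossible for $\ds \in \Pbid_n$.
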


In particular, the set of uniquely realizable net-degree sequences is
contained in the boundary of $\Pbid_n$. The vertices of $\Pbid_n$ are uniquely
realizable as in the case of ordinary and directed graphs.

\begin{cor}\label{cor:bid_uniq_num}
    The number of uniquely realizable net-degree sequences of bidirected
    graphs on $n$ nodes is
    \[
        2^n\binom{n}{2} + 2^n.
    \]
\end{cor}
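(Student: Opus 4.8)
The plan is to enumerate the uniquely realizable net-degree sequences directly from the explicit description obtained in Corollary~\ref{cor:bid_unique}. Combining the second part of that corollary with the realizability criterion of Theorem~\ref{thm:bErdGal}(iii), a vector $\ds = (d_1,\dots,d_n) \in \Z^n$ is a uniquely realizable net-degree sequence of a bigraph on $n$ nodes if and only if $n-2 \le |d_i| \le n-1$ for every $i$, at most two coordinates satisfy $|d_i| = n-2$, and $d_1 + \cdots + d_n$ is even. (The bound $|d_i|\le n-1$ already implies $-(n-1)\le d_i\le n-1$, so parity of the coordinate sum is the only further constraint needed for $\ext{\ds}\in\Pbid_n$.) We may assume $n \ge 3$, so that the four integers $n-1,\,-(n-1),\,n-2,\,-(n-2)$ are pairwise distinct; the degenerate cases $n \le 2$, where a ``short'' value collapses to $0$, are handled directly.

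Call a coordinate \emph{short} if $|d_i| = n-2$ and \emph{long} if $|d_i| = n-1$, write $k_i = 1$ if $i$ is short and $k_i = 0$ otherwise, and set $k := k_1 + \cdots + k_n$; by the characterization above $k \in \{0,1,2\}$. The first step is a parity reduction. In either case $d_i \equiv (n-1)+k_i \pmod 2$, hence
\[
    d_1 + \cdots + d_n \ \equiv \ n(n-1) + k \ \equiv \ k \pmod 2,
\]
because $n(n-1)$ is a product of two consecutive integers and thus even. Therefore the parity condition on $\ds$ is equivalent to $k$ being even, i.e.\ to $k \in \{0,2\}$: no uniquely realizable net-degree sequence has exactly one short coordinate.

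The second step is the count, which now splits into two disjoint families. A uniquely realizable sequence with $k=0$ is obtained by choosing a sign $\varepsilon_i \in \{\pm1\}$ and setting $d_i = \varepsilon_i(n-1)$ for each $i$; this gives $2^n$ sequences, all distinct since the four admissible values are pairwise distinct. A uniquely realizable sequence with $k=2$ is obtained by choosing the $2$-element set of short coordinates ($\binom{n}{2}$ choices) together with a sign for each of the $n$ coordinates ($2^n$ choices), which gives $\binom{n}{2}\,2^n$ sequences, disjoint from the first family. Summing the two contributions yields
\[
    2^n + \binom{n}{2}\,2^n \ = \ 2^n\binom{n}{2} + 2^n ,
\]
which is the claimed number.

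The one genuinely interesting point is the parity reduction that rules out a single short coordinate; everything else is elementary counting. The mild caveat is that the sign-counting in the two families tacitly uses $n \ge 3$ --- for $n \le 2$ the short coordinates would carry only one value each --- so those small cases must be excluded from this scheme and verified separately.
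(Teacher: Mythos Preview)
Your argument is correct and is essentially the intended derivation from Corollary~\ref{cor:bid_unique}; the paper itself states this corollary without proof, so there is nothing to compare against beyond confirming that your count matches the characterization. Your parity reduction showing that exactly one short coordinate is impossible is the key observation, and the remaining enumeration is routine.

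One remark on the caveat you flag at the end: you say the cases $n\le 2$ ``must be verified separately'' but do not carry this out. In fact the formula as stated does \emph{not} hold for $n\in\{1,2\}$: for $n=1$ the only net-degree sequence is $(0)$, while the formula gives $2$; for $n=2$ the uniquely realizable sequences are $(0,0)$ and $(\pm 1,\pm 1)$, five in all, while the formula gives $8$. The overcount arises exactly where you anticipate, namely because $n-2\in\{-1,0\}$ collapses the sign choice on short coordinates. So the corollary should be read with the standing assumption $n\ge 3$ (consistent with the paper's implicit convention elsewhere), and it would be worth saying this explicitly rather than leaving the small cases as an unexecuted promise.
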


\end{document}